\documentclass[12pt]{amsart}
\usepackage{amsfonts}
\usepackage{amsmath}
\usepackage{amssymb}
\usepackage[margin=1.2in]{geometry}
\usepackage{tikz}
\usepackage{mathrsfs,euscript,anyfontsize}
\usepackage{mathtools}
\usepackage{commath}
\usepackage{enumitem}

\newcommand{\eT}{{\EuScript T}}

\newcommand{\R}{\mathbb{R}}
\newcommand{\C}{\mathbb{C}}
\newcommand{\N}{\mathbb{N}}
\newcommand{\inv}{^{-1}}
\renewcommand{\Re}{\operatorname{Re}}
\renewcommand{\Im}{\operatorname{Im}}

\newtheorem{theorem}{Theorem}[section]

\newtheorem{lemma}[theorem]{Lemma}

\theoremstyle{definition}

\newtheorem{remark}[theorem]{Remark}
\newtheorem{example}[theorem]{Example}

\makeatletter
\@namedef{subjclassname@2020}{
  \textup{2020} Mathematics Subject Classification}
\makeatother

\numberwithin{equation}{section}

\begin{document}
\title[A general quantified Wiener-Ikehara theorem]{A general quantified Wiener-Ikehara theorem}

\author[G. Debruyne]{Gregory Debruyne}
\thanks{G. Debruyne gratefully acknowledges support by a senior postdoctoral fellowship of Research--Foundation Flanders through the grant number 1249624N}
\address{G. Debruyne\\ Department of Mathematics: Analysis, Logic and Discrete Mathematics\\ Ghent University\\ Krijgslaan 281 Gebouw S8\\ B 9000 Gent\\ Belgium}
\email{gregory.debruyne@UGent.be}
\subjclass[2020]{Primary  40E05; Secondary 11M45, 44A10.}
\keywords{Complex Tauberians; Wiener-Ikehara theorem; Laplace transform;  quantified; flexible one-sided Tauberian condition}

\begin{abstract} Building on the technology developed in \cite{Deb-qik}, we establish a quantified Tauberian theorem under a flexible Wiener-Ikehara Tauberian condition and under several types of boundary behavior for the Laplace transform. \end{abstract} 

\maketitle

\section{Introduction}

The Wiener-Ikehara theorem is one of the most celebrated results in Tauberian theory and in general, classical analysis. One of the most frequently cited versions reads as follows.

\begin{theorem} \label{qwi: thold}Let $\tau: [0,\infty) \rightarrow \R$ be a non-decreasing function for which the Laplace transform
\begin{equation}
 \mathcal{L}\{\tau;s\} = \int^{\infty}_{0} e^{-sx} \tau(x) \dif x \quad \text{converges for } \Re s > 1
\end{equation}
and for which $\mathcal{L}\{\tau;s\} - A/(s-1)$ admits a continuous extension to the line $\Re s = 1$ for a certain $A > 0$. Then,
\begin{equation}
 \tau(x) \sim A e^{x}, \ \ \ x \rightarrow \infty.
\end{equation}
\end{theorem}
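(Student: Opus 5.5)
We follow the standard Fejér-kernel route to the Wiener--Ikehara theorem. First normalize: set $S(x) = e^{-x}\tau(x)$, and note that for $\Re s>0$,
\[
\mathcal{L}\{\tau; s+1\} - \frac{A}{s} = \int_0^\infty e^{-sx}\bigl(S(x)-A\bigr)\dif x =: G(s).
\]
By hypothesis, $G(\sigma+it)$ converges, as $\sigma\to0^+$, to a continuous function $G(it)$, locally uniformly in $t$. Fix a kernel $K_\lambda(y) = \lambda K(\lambda y)$ with $K(y) = \frac{1}{2\pi}\bigl(\frac{\sin(y/2)}{y/2}\bigr)^2$. Its Fourier transform $\hat K(t) = (1-|t|)^+$ is supported in $[-1,1]$, $K\ge0$, and $\int K=1$.

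The first key step is the smoothed limit
\[
\lim_{x\to\infty}\int_{-\infty}^{\infty} S(x-y/\lambda)\,K(y)\dif y = A \quad\text{for each fixed } \lambda>0 .
\]
To get it, write for $\sigma>0$
\[
\int_0^\infty e^{-\sigma u}\bigl(S(u)-A\bigr)\lambda K(\lambda(x-u))\dif u = \frac{1}{2\pi}\int_{-\lambda}^{\lambda} G(\sigma+it)\,\hat K(t/\lambda)\,e^{ixt}\dif t,
\]
which is Fubini plus Fourier inversion. Then let $\sigma\to0^+$. On the right we have uniform convergence on the compact interval $[-\lambda,\lambda]$. On the left we need monotone convergence for the part $\int e^{-\sigma u}S(u)\lambda K(\lambda(x-u))\,\dif u$, using $S,K\ge0$ (the $\tau\ge0$ eventually issue is handled by adding a constant to $\tau$, which changes $G$ only by an analytic term). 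The $A$-part converges directly. This yields that the left side at $\sigma=0$ is finite and equals $\frac{1}{2\pi}\int_{-\lambda}^{\lambda} G(it)\hat K(t/\lambda)e^{ixt}\dif t$. By Riemann--Lebesgue this tends to $0$ as $x\to\infty$. Finally, the contribution from $u<0$ (i.e. $y>\lambda x$) vanishes in the limit because $\int_{\lambda x}^\infty K\to0$.

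The second step is to remove the smoothing using monotonicity, and we expect this to be the main obstacle. It needs an a priori bound first. Since $\tau$ is non-decreasing, for $|y|\le a$ we have $S(x-y/\lambda)\ge e^{-2a/\lambda}S(x-a/\lambda)$. Taking $\lambda=1$ in the smoothed limit then gives $S(x)\le C$ for all large $x$, so $S$ is bounded.

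For the upper bound, fix $a>0$ and use $\tau(x+ y/\lambda)\ge\tau(x-a/\lambda)$ for $|y|\le a$. This gives
\[
e^{-2a/\lambda}\,S(x-a/\lambda)\int_{-a}^{a}K \le \int_{-a}^{a} S(x+a/\lambda - y/\lambda)\,K(y)\dif y \le A+o(1).
\]
Hence $\limsup S \le A e^{2a/\lambda}/\int_{-a}^aK$. Letting $\lambda\to\infty$ and then $a\to\infty$ gives $\limsup S\le A$.

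For the lower bound, split the smoothed integral into $|y|\le a$ and $|y|>a$. The tail is at most $\sup S\cdot\int_{|y|>a}K$, which is small by boundedness. On $|y|\le a$, monotonicity gives $S(x-y/\lambda)\le e^{2a/\lambda}S(x+a/\lambda)$. Therefore
\[
A \le e^{2a/\lambda}\liminf S + C\int_{|y|>a}K,
\]
and the same double limit yields $\liminf S\ge A$. Together these give $\tau(x)\sim Ae^x$.

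The delicate points are the justification of the limit $\sigma\to0^+$ with only continuity (not analyticity) at the boundary, handled by positivity and monotone convergence, and the bootstrapping of boundedness before the lower estimate.
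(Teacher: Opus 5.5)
Your proof is correct. It is the classical Fej\'er-kernel argument, and the analytic steps all go through: absolute convergence for $\Re s>1$ follows from the monotonicity of $\tau$ and justifies Fubini; monotone convergence applies as $\sigma\to0^+$ once $\tau\ge 0$; and Riemann--Lebesgue applies for each fixed $\lambda$. There is one harmless slip. In your upper-bound display the arguments $x+(a-y)/\lambda$ are $\ge x$, so the left side should read $e^{-2a/\lambda}S(x)\int_{-a}^{a}K$. If you keep $S(x-a/\lambda)$, the factor becomes $e^{-3a/\lambda}$ instead. The limit is unaffected either way.

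The paper does not prove this theorem; it quotes it as the classical statement. Its own machinery, specialised to $h(x)=e^{x}$, $F\equiv A$, $S=e^{-x}\tau-A$ as in Theorem~\ref{qwi th:cor1}, handles the Tauberian step differently.

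\textbf{Boundedness.} Your bootstrap corresponds to Lemma~\ref{qwi: taublem}. There the paper compares $S(x)+F(x)$ only with forward values $S(x+u)+F(x+u)$, $u\ge 0$. It absorbs $h(x+u)/h(x)$ through $k(u)=e^{u}$ and the normalisation $\int_0^\infty \phi(u)/k(u)\,\mathrm{d}u$.

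\textbf{Unsmoothing.} This is the real difference. You restrict the positive Fej\'er kernel to a window $|y|\le a$ and send $\lambda\to\infty$, then $a\to\infty$. Lemma~\ref{qwilem: taublem} instead uses a kernel with $\int\phi=1$, a finite weighted first moment, and the sign pattern $\phi\ge0$ on $[0,\infty)$, $\phi\le 0$ on $(-\infty,0]$. The monotonicity and the preliminary bound \eqref{qwi taco2} give one-sided increment bounds: $S(x)-S(x+y)\le C|y|e^{|y|^{\alpha}}$ for $y>0$, and $\ge -C|y|e^{|y|^{\alpha}}$ for $y\le 0$. Multiplying these by $\phi(\lambda y)$ and integrating gives $S(x)\le \lambda\int S(x+y)\phi(\lambda y)\,\mathrm{d}y+O(1/\lambda)$ in one stroke, and the reverse inequality follows with $\phi(-y)$.

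\textbf{The convolution.} The paper bounds it by $E_{\ast}(x,\lambda)$ via Lemma~\ref{qwi: lemtransformcond}, not by Riemann--Lebesgue. This uses a kernel in $\mathcal{F}(\mathcal{D}(\mathbb{R}))$ with sub-exponential decay constructed in \cite{Deb-qik}.

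\textbf{What each route buys.} Yours needs only continuity of the boundary function and elementary tools, but it is intrinsically qualitative. The Fej\'er kernel has no first moment, and balancing $e^{2a/\lambda}-1\asymp a/\lambda$ against the tail mass $\int_{|y|>a}K\asymp 1/a$ gives at best a Tauberian loss of order $\lambda^{-1/2}$. The paper's route needs the specially built sign-changing kernel and quantitative regularity $g\in\mathcal{T}_{\ast}$. In return it yields the $O(F(x)/\lambda)$ loss and the explicit remainders of Theorem~\ref{qwi: thmain}.
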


One of the most well-known applications of this theorem is the deduction of the prime number theorem in the form $\pi(x) \sim x/\log x$, from the absence of $\zeta$-zeros on the line $\Re s = 1$. More generally, the Wiener-Ikehara theorem can often be applied in number theory to derive asymptotic formulas for sum functions of non-negative arithmetic functions. The Wiener-Ikehara theorem offers a powerful tool, especially in the subfield of Beurling prime number theory \cite{bateman-diamond, Debruyne-VindasPNTEquivalences, d-v-l1, s-v, tranoy-vindas, zhang2014} where the Laplace transform of the relevant functions may not admit an analytic extension and thus alternative approaches based on switching contours in the Perron inversion formula become inadequate.

Its realm of applications, however, extends beyond number theory. For example, there are applications in spectral asymptotics \cite{aramaki1988, shubin2001}, renewal theory \cite{chen-vindas, koga} and stability questions in dynamical systems \cite{petkov-stoyanov}, among many more. We refer to \cite[Ch. III]{korevaarbook} and \cite{p-t-z} for more extensive discussions of the literature of the Wiener-Ikehara theorem and some related Tauberian theorems. 

Obviously quantified versions of the Wiener-Ikehara theorem are then valuable in those applications if one desires a better error term. The literature for quantified versions of the Wiener-Ikehara theorem can however be hard to track down \cite{hartlappthesis}. The relevant papers often focus on a particular application \cite{aramaki1988, d-v, malliavin, nyman, petkov-stoyanov} and the Tauberian theorems are then designed specifically for that application, with no intention to provide the most general version that may apply to other contexts. Researchers may also find it difficult to recognize exactly which theorem applies to their setting \cite{p-t-z}. We therefore think it is worthwhile to record here a very general but still fairly accessible version of a quantified Wiener-Ikehara theorem. 

We briefly review some types of boundary behavior for the Laplace transform that have been considered previously in the literature. In \cite{aramaki1988, petkov-stoyanov}, one studies analytic extensions to a half-plane of the Laplace transform together with polynomial bounds. Hypotheses involving more general behavior on the analytic extensions are discussed in \cite{hartlappthesis}. In \cite{d-v, nyman} one studies certain $C^{\infty}$-boundary behavior while in \cite{malliavin} one exploits ultradifferentiable boundary behavior of the Laplace transform.

In \cite[Ch. II.7]{tenenbaumbook}, one does provide an effective/quantified version of the Wiener-Ikehara theorem, which was later generalized in \cite{revesz-roton}, but the assumptions on the Laplace transform are quite weak, roughly corresponding to at best $C^1$-boundary behavior, and it appears that at best one can only achieve the error term $O(e^x/x)$ for $\tau$ if this behavior were considered in Theorem \ref{qwi: thold}. Our main theorem in Section \ref{qwi sec:mainth} covers all aforementioned types of boundary behavior.

Closely related to the Wiener-Ikehara theorem is the Ingham-Karamata theorem. Loosely speaking, in an Ingham-Karamata theorem the abscissa of (absolute) convergence of the Laplace transform of a non-decreasing function is $0$, while for a Wiener-Ikehara theorem it is strictly larger than $0$. Recently, the author has developed an extensive remainder theory for the Ingham-Karamata theorem under a flexible one-sided Tauberian condition. The main goal of this paper is to present the general quantified Wiener-Ikehara theorem that can be established with the newly developed technology from \cite{Deb-qik}.

We should mention that sometimes one may perform some manipulations to reduce a Wiener-Ikehara argument to an Ingham-Karamata one. For example, one may consider the function $S(x) := \int^{x}_{0} e^{-u} \dif \tau(u)$ with $\mathcal{L}\{S;s\} = (1 + s^{-1}) \mathcal{L}\{\tau; s+ 1\}$. If $\tau$ is non-decreasing, then so is $S$ and asymptotics for $\tau$ may be recovered from those for $S$ through integration by parts. Another possible manipulation is to consider $S(x) = e^{-x} \tau(x)$ as done in \cite[Th. 1.4]{Deb-qik}, but this has as down-side that $S$ is not necessarily non-decreasing. However, these manipulations may cause some loss of information; the factor $s^{-1}$ in the first manipulation may for instance worsen the available regularity of the Laplace transform at $s = 0$, while also the integration by parts argument may result in a worse error term. We therefore still think there is intrinsic interest to provide a direct argument for the quantified Wiener-Ikehara theorem. Furthermore, we anticipate that researchers may also more easily recognize potential applications if the Tauberian theorem is written down into a proper Wiener-Ikehara setting. 

In \cite[Th. 1.4]{Deb-qik} we already communicated a model quantified Wiener-Ikehara theorem. Therein, merely to illustrate the strength of the innovative techniques developed in the paper, we discuss a version in which the analytic extension of the Laplace transform has only a simple pole at $s = 1$. It is mentioned that it should be possible to handle a more flexible Wiener-Ikehara Tauberian condition, allowing one to treat more versatile singularities (and also other boundary behavior), but this requires a more delicate argument and was postponed. This is the subject of this paper. We also mention \cite{delange1954} where a non-quantified version of the Wiener-Ikehara theorem is given able to consider other singularities than a simple pole.

We will use a pure Fourier method. The advantages of this technique have been extensively discussed in \cite{Deb-qik}. We only mention here that this approach splits the treatment of the Tauberian condition and that of the boundary behavior of the Laplace transform. As such, the analysis of the boundary behavior is the same as in \cite{Deb-qik} and is captured below by Lemma \ref{qwi: lemtransformcond}. In fact, we will focus here primarily on the Tauberian condition, which is more cumbersome to deal with than in \cite{Deb-qik}. We also remark briefly that the sequence of test functions $\phi_n$ that was one of the main technical innovations in \cite{Deb-qik} is also present in this work, but is tacitly contained in Lemma \ref{qwi: lemtransformcond}.

The Tauberian condition we shall work with is modelled after $e^{x}(S(x) + F(x))$ being non-decreasing. In comparison with Theorem \ref{qwi: thold} we impose $S(x) = e^{-x} \tau(x) -A$ to translate the Laplace boundary behavior to $\Re s = 0$ and to subtract the singularity at $s= 1$. The function $F$, as discussed in \cite{Deb-qik}, incorporates the desired flexibility necessary for varying applications, and may be used to encode the singularities of the Laplace transform; in the setting of Theorem \ref{qwi: thold} one chooses $F(x) = A$. We shall actually treat a more general Tauberian condition, corresponding to $h(x) (S(x) + F(x))$ being non-decreasing, where $h(x)$ does not necessarily have to be $e^x$. The reason for incorporating this additional flexibility in the Tauberian condition is to cover some twisted Laplace transforms such as $\int^{\infty}_{0} x^{-\beta} e^{-sx} \tau(x) \dif x$ for a non-decreasing function $\tau$. These for example occur in \cite{katsuda-tsunada} and in the proof of \cite{petkov-stoyanov}.

As in \cite{Deb-qik}, one may anticipate that the error terms obtained in our main Theorem \ref{qwi: thmain} are in most cases best possible with respect to the considered assumptions, but we have not pursued to capture this is in a rigorous statement as in \cite[Th. 5.1]{Deb-qik}. 

As a small sample result, we record here a quantified Wiener-Ikehara theorem involving an analytic continuation with one pole-like singularity.

\begin{theorem} Let $\tau: [0,\infty) \rightarrow \mathbb{R}$ be a non-decreasing function, $\beta, \rho, A >0$ and $G,H: [0,\infty) \rightarrow \mathbb{R}$ two continuous functions such that $G(t)$ and $tH(t)$ are non-decreasing. Suppose that $\mathcal{L}\{\tau;s\} = \int^{\infty}_0 e^{-sx} \tau(x) \dif x$ converges on $\Re s > \beta$ and that
\begin{equation}
 \mathcal{L}\{\tau; s \} - \frac{A}{(s-\beta)^{\rho}}
\end{equation}
admits an analytic continuation to the region $\{s : |\Re s - \beta| \leq 1/G(|\Im s|) \}$ where it satisfies the bound $H(|s - \beta|)$.
Then, as $x\rightarrow \infty$,
\begin{equation} \label{qwieq: sam}
 \tau(x) = \frac{Ae^{\beta x} x^{\rho - 1}}{\Gamma(\rho)} + O\bigg(e^{\beta x} \inf_{\lambda \geq 1} \left\{\frac{x^{\rho - 1}}{\lambda} + \exp\bigg(-\frac{x}{G(\lambda) + 675 \lambda^{\inv}}\bigg) \int^{\lambda}_{0} H(t) \dif t \right\}  \bigg).\end{equation}
\end{theorem}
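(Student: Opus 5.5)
This theorem should follow from the main Theorem \ref{qwi: thmain} of Section \ref{qwi sec:mainth}. I would derive it by putting it into the normalized form used there. First I shift to the line $\Re s = 0$. Set $S(x) := e^{-\beta x}\tau(x) - F(x)$, where
\[ F(x) := \frac{A x^{\rho-1}}{\Gamma(\rho)}, \qquad \text{so that} \qquad \mathcal{L}\{F;s\} = \frac{A}{\Gamma(\rho)}\,\Gamma(\rho)\, s^{-\rho} = A s^{-\rho}. \]
Then
\[ \mathcal{L}\{S;s\} = \mathcal{L}\{\tau;s+\beta\} - \frac{A}{s^{\rho}}, \]
which by hypothesis extends analytically to the region $\{|\Re s|\le 1/G(|\Im s|)\}$ and is bounded there by $H(|s|)$. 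The Tauberian condition is in the form required by the main theorem, with $h(x)=e^{\beta x}$: the function $h(x)(S(x)+F(x)) = \tau(x)$ is non-decreasing.

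I must also check that $F$ satisfies whatever growth and regularity hypotheses Theorem \ref{qwi: thmain} places on it. $F$ is a power $x^{\rho-1}$, so it is smooth, and its local oscillation is controlled by $x^{\rho-1}$. On a window of length $1/\lambda$ one has
\[ F(x+1/\lambda) - F(x) \ll \frac{x^{\rho-2}}{\lambda}, \]
and $F$ itself is of size $x^{\rho-1}$. The term of this kind in the main theorem, produced by the Tauberian argument via $\sup_{|u|\le 1/\lambda} h(x)/h(x+u)$ times $F$, is therefore $O(x^{\rho-1}/\lambda)$. Note that $e^{\beta/\lambda} = 1+O(1/\lambda)$, so the twist $h$ only contributes a factor $1+O(1/\lambda)$. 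This yields the first term in \eqref{qwieq: sam}.

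Next I handle the boundary-behaviour term, for which I would invoke Lemma \ref{qwi: lemtransformcond} in its analytic-continuation case. For the test functions supported in $[-\lambda,\lambda]$, one shifts the contour to $\Re s = \pm 1/(G(\lambda)+c\lambda^{-1})$. The hypothesis that $G$ is non-decreasing guarantees that the shifted segment lies inside the region of continuation. Along the way one picks up the bound $H$, integrated over $|t|\le\lambda$. The monotonicity of $tH(t)$ is used to compare $H$ at $|s|$ with $H$ at $|\Im s|$ on the slightly shifted contour, since $|s|\ge|t|$ gives $H(|s|)\le (|s|/|t|)H(|t|)$. The test functions $\phi_n$ are hidden in the lemma, and they account for the additional $675\lambda^{-1}$ in the denominator. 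This gives the term
\[ \exp\bigl(-x/(G(\lambda)+675\lambda^{-1})\bigr)\int_0^\lambda H(t)\,dt. \]
Finally I multiply back by $e^{\beta x}$ and take the infimum over $\lambda\ge1$.

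The main obstacle I expect is matching the precise constant $675$ and the form of the exponential. This requires checking exactly how the main theorem's boundary term specializes when the continuation region is given by $1/G$, and in particular handling small $|t|$. Near $t=0$, $H(|s|)$ could be large compared with $H(|t|)$, but the integral $\int_0^\lambda H$ of the hypotheses already accounts for this.
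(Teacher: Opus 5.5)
\textbf{The gap: checking that $S$ lies in $\eT$.} Your architecture matches the paper's. The statement is the special case $\mathcal{S}=\{0\}$, $\mathcal{A}_0=\{\rho-1\}$, $c_{0,\rho-1}=A/\Gamma(\rho)$, $\mathcal{T}_\ast=\mathcal{T}_{\mathrm{An}}(G,H)$ of Theorem \ref{qwi: thspeccor}. That theorem is reduced via Theorem \ref{qwi th:cor1} to Theorem \ref{qwi: thmain} with $h=e^{\beta x}$, $h_{\mathrm{D}}=\beta h$, $F_{\mathrm{D}}=F$. The term $x^{\rho-1}/\lambda$ is $\frac{h_{\mathrm{D}}}{h}\frac{F}{\lambda}+\frac{F_{\mathrm{D}}}{\lambda}$, and $675$ is simply the admissible $\kappa$ in $E_{\mathrm{An}}$, so that constant is not the obstacle.

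The real gap is the step you treat as routine: that $S=e^{-\beta x}\tau-F$ with $F=Ax^{\rho-1}/\Gamma(\rho)$ belongs to $\eT$. The function $x^{\rho-1}$ is not smooth at $0$, and \ref{qwi taco4}, \ref{qwi taco5} must hold for \emph{all} $y\in\mathbb{R}$, not just on windows of length $1/\lambda$.
\begin{itemize}
\item For $0<\rho<1$: $F(x+y)\to\infty$ as $y\to -x^{+}$, while the right-hand sides of \ref{qwi taco4}, \ref{qwi taco5} stay bounded. So both conditions fail, and they are used pointwise inside the integrals in Lemmas \ref{qwi: taublem} and \ref{qwilem: taublem}.
\item For $\rho>1$: $F(x)\to 0$ as $x\to 0^{+}$, so \ref{qwi taco6} fails whenever $\tau(0)\neq 0$. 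Yet \ref{qwi taco6} is what gives $S+C_{14}F\ge 0$ on $\mathbb{R}$ and the global bound \eqref{qwi taco2}. The case $y\le 0$ of Lemma \ref{qwilem: taublem} uses that bound at points $x+y<X$.
\item You also need $\tau(X)\ge 0$ for \ref{qwi taco1bis}.
\end{itemize}
So, apart from essentially $\rho=1$, Theorem \ref{qwi: thmain} does not apply as you set it up.

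\textbf{The repair.} Use the reductions of Theorems \ref{qwi th:cor1} and \ref{qwi: thspeccor}:
\begin{itemize}
\item replace $x^{\rho-1}$ by the regularized $F_{\rho-1,\theta}$ (for $\rho<1$ this is $x^{\rho-1}(1-e^{-\theta x})$, bounded near $0$);
\item make $F$ vanish on $(-\infty,0)$;
\item shift $\tau$ so that $\tau(X)\ge 0$;
\item set $\tau=0$ on $[0,X)$, which gives \ref{qwi taco1bis} and \ref{qwi taco6}.
\end{itemize}

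The price is that the Laplace transform changes. It picks up $\mathcal{L}\{W;s\}$ with $W$ compactly supported and, for $\rho<1$, the term $A(s+\theta)^{-\rho}$, which is the transform of $\frac{A}{\Gamma(\rho)}x^{\rho-1}e^{-\theta x}$. Your $H$ need not control either term, so handle them the way $W$ is handled in the proof of Theorem \ref{qwi th:cor1}.

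Lemma \ref{qwi: lemtransformcond} only needs support in $[0,\infty)$ and an eventual sign of $S+F$, not membership in $\eT$. So apply it to the unmodified function $e^{-\beta x}\tau-Ax^{\rho-1}/\Gamma(\rho)$. Then estimate the explicit corrections against $\lambda\phi(\lambda\,\cdot)$ directly, using $\phi(y)\ll e^{-|y|^{\gamma}}$. This gives $O(e^{-\theta x/2}+\lambda e^{-(\lambda x/2)^{\gamma}})$.
\begin{itemize}
\item The second term is $\ll x^{\rho-1}/\lambda$.
\item The first is absorbed once $\theta\ge 2/G(0)$. Since $G$ is non-decreasing, the exponential part of the error is $\gg e^{-x/G(0)}\int_0^1 H$ for every $\lambda\ge1$.
\end{itemize}

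\textbf{The unchecked hypothesis $E_{\ast}(x,1)\ll F(x)$.} This is required for \eqref{eqqwi: mainresult}, and you never verify it. It does hold here because $E_{\mathrm{An}}$ decays exponentially in $x$. However, $E_{\mathrm{An}}(x,\lambda)=\infty$ for $\lambda<2G(0)^{-1}$. So you must use a fixed $\lambda_0\ge 2G(0)^{-1}$ in the preliminary bound, and absorb the additive term $G(0)^{-1}\max_{t\le 5G(0)^{-1}}H(t)$ into $\int_0^{\lambda}H$.
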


For example, if $G(x) = \delta^{\inv}$ and $H(t) = t^{A-1}$ for some $\delta, A > 0$, then the error term in \eqref{qwieq: sam} becomes $O\Big(e^{\beta x - \frac{\delta x}{A+1} }x^{\frac{(\rho - 1)A}{A+1}}\Big)$. This theorem is a special case of Theorem \ref{qwi: thspeccor} below where we consider more general singularities given by sums of pole-like and logarithmic terms. The implicit constant in \eqref{qwieq: sam} is allowed to depend on the function $\tau$ here and we warn the quick reader that in case $H(t)/t$ has subpolynomial growth, then a slightly better error term may be reached by considering $E_{\mathrm{AnA}}$, see Section \ref{qwi sec:mainth}. In Section \ref{qwisec: corol} we provide more general quantified versions of the Wiener-Ikehara theorem for the Laplace and Laplace-Stieltjes transforms.

The paper is organized as follows. In Section \ref{secqwi: mainth} we state the main theorem after discussing some preliminaries, the different types of boundary behavior for the Laplace transform and the Tauberian condition. Apart from the section on the Tauberian condition, these are essentially the same as in \cite{Deb-qik}, but we include them for the reader's convenience. In Section \ref{secqwi: proof} we prove the main theorem, which in view of the already established technology from \cite{Deb-qik}, amounts to establishing the Tauberian Lemma \ref{qwilem: taublem} for the new Tauberian condition. In the final Section \ref{qwisec: corol} we provide some corollaries of our main theorem that one can immediately appeal to in applications and we illustrate how to apply our theorem through the example of the prime number theorem.

Our notation is standard throughout. We use the Vinogradov notation $f(x) \ll g(x)$ or $f(x) = O(g(x))$, that is, there exists a constant $C$ such that $|f(x)| \leq Cg(x)$ for all $x$ in the domain of $f$ and $g$. If the implicit constant $C$ depends on external parameters, these shall always be incorporated into the notation. If we write $f(x) \ll_{M_n} g(x)$, then the implicit constant is understood to depend on the whole sequence $M_n$ instead of only on the $n$'th entry. We let $\chi_{E}$ denote the indicator function of a measurable set $E$, that is, $\chi_{E}(x) = 1$ if $x \in E$ and $0$ otherwise. The numbers $p,q \in [1,\infty]$ always represent \emph{conjugate} numbers, that is, $p^{-1} + q^{-1} = 1$.

The author wishes to thank Frederik Broucke and Jasson Vindas for valuable suggestions to previous versions of the manuscript.

\section{Statement of the main theorem} \label{secqwi: mainth}

\subsection{Preliminaries} \label{qiksecprelim}

If $U\subseteq\mathbb{R}$ is open, $\mathcal{D}(U)$ is the space of all smooth functions with compact support in $U$; its topological dual $\mathcal{D}'(U)$ is the space of distributions on $U$.
The standard Schwartz test function space of rapidly decreasing functions is denoted as usual by $\mathcal{S}(\mathbb{R})$, while $\mathcal{S}'(\mathbb{R})$ stands for the space of tempered distributions. The dual pairing between a distribution $f$ and a test function $\varphi$ is denoted as $\langle f, \varphi\rangle$, or  as $\langle f(x), \varphi(x)\rangle$ with the use of a dummy variable of evaluation. Locally integrable functions are regarded as distributions via $\langle f(x),\varphi(x)\rangle=\int_{-\infty}^{\infty}f(x)\varphi(x)\mathrm{d}x$. 

We fix the constants in the Fourier transform as
$\hat{\varphi}(t)=\mathcal{F}\{\varphi;t\}=\int_{-\infty}^{\infty}e^{-itx}\varphi(x)\:\mathrm{d}x.$ Naturally, the Fourier transform is well defined on $\mathcal{S}'(\mathbb{R})$ via duality, that is, the Fourier transform of $f\in\mathcal{S}'(\mathbb{R})$ is the tempered distribution $\hat{f}$ determined by $\langle \hat{f}(t),\varphi(t)\rangle=\langle f(x),\hat{\varphi}(x)\rangle$.
If $f\in\mathcal{S}'(\mathbb{R})$ has support in $[0,\infty)$, its Laplace transform is
$\mathcal{L}\left\{f;s\right\}=\left\langle f(u),e^{-su}\right\rangle,$ analytic on $\Re \:s>0$, and its Fourier transform $\hat{f}$ is the distributional boundary value of $\mathcal{L}\left\{f;s\right\}$ on $\Re \:s=0$. See \cite{vladimirov,zemanian} for complete accounts on Laplace transforms of distributions.

Let $M_n$ be a sequence of positive numbers. A smooth function $g : \R \rightarrow \C$ is said to be \emph{ultradifferentiable} at $t$ with respect to the sequence $M_{n}$ if there exist $B > 0$ and an open neighborhood $U$ of $t$ such that
\begin{equation} \label{eqriknnqu}
 \left|g^{(n)}(u)\right| \ll B^n M_{n}, \ \ \ \text{for all }n \in \N \text{ and } u \in U.
\end{equation}
A useful tool in this context is the so-called \emph{associated function}. This is
\begin{equation} \label{eqriknquass}                                                                                       
 M(x) = \sup_{n \in \mathbb{N}} \log\left(\frac{x^n}{M_{n}}\right), \ \ \ x > 0,
\end{equation}
whenever the supremum exists. For $M_{n} = n^n$ for instance, the associated function is $M(x) = x/e + O(1)$ as $x \rightarrow \infty$. We shall impose here that the sequence is always \emph{logarithmically convex}, that is, $M_n^2 \leq M_{n-1}M_{n+1}$ for all $n \geq 1$, and \emph{subanalytic}, that is, there exist $C,L > 0$ such that $M_n \geq CL^n n^n$ for all $n \in \N$.

\subsection{The hypotheses on the Laplace transform}

In this section we introduce the different types of boundary behavior we consider for the Laplace transform. These are the same assumptions that were discussed in \cite{Deb-qik}, but we recall them for convenience of the reader. 
Those assumptions will be exploited to estimate a certain integral in Lemma \ref{qwi: lemtransformcond}. Naturally, this is not an exhaustive list and, dependent of the application, there may be alternative behavior of the Laplace transform available that provides a better estimate of this integral, and therefore a better quantified rate in the Tauberian theorem. Nevertheless, with respect to the behavior cited below, our estimates in Lemma \ref{qwi: lemtransformcond} should often be sharp \cite[Section 5]{Deb-qik}.

Here $N$ denotes a natural number, $B, D$ and $ \delta_0$ are positive real numbers, $M_n$ is a logarithmically convex subanalytic sequence of positive numbers, $1 \leq p \leq \infty$---in particular we treat $L^{\infty}$-bounds for the Laplace transform---and $G,H$ are positive (locally integrable) functions defined on appropriate domains. Furthermore $\omega: \R_{+} \rightarrow \R_{+}$ is a positive non-decreasing function satisfying $\omega(y) \gg_{\omega} y$ as $y \rightarrow 0^{+}$.
 
We treat the following function classes. Depending on the class we sometimes impose additional constraints on $G$ and $H$.
  
\begin{enumerate}[label=(H.\arabic*)]
 \item $\mathcal{T}_{\mathrm{Dif}}(N,G,p,D)$ contains the functions $g: \R \rightarrow \C$ that are $N$ times \emph{differentiable} and satisfy $\|g^{(N)}(t)/G(t)\|_{L^{p}} \ll 1$ and $\max_{0 \leq j \leq N-1} |g^{(j)}(0)| \leq D$.   
\item $\mathcal{T}_{\mathrm{DifI}}(N,G,D)$ is comprised of the functions $g: \R \rightarrow \C$ that are $N$ times \emph{differentiable}, satisfy $\int^{R}_{-R}|g^{(N)}(t)| \mathrm{d}t \ll G(R)$ for all $R > 0$ and $\max_{0 \leq j \leq N-1} |g^{(j)}(0)| \leq D$. Here $G: [0,\infty) \rightarrow \R$ shall always be non-decreasing.
 \item $\mathcal{T}_{\mathrm{HC}}(N,G,p,\omega,D,\delta_{0})$ contains the functions $g: \R \rightarrow \C$ that are $N$ times differentiable, satisfy $\max_{0 \leq j \leq N} |g^{(j)}(0)| \leq D$ and for which $g^{(N)}$ is \emph{uniformly H\"older continuous} with respect to $L^{p}$, $G$ and $\omega$, that is 
 \begin{equation} \label{eqrfruhc}
 \left\|\frac{\sup_{\left|u\right| \leq \delta}\left|g^{(N)}(t+u) - g^{(N)}(t)\right|}{G(t)}\right\|_{L^{p}} \ll \omega(\delta), \quad 0 < \delta \leq\delta_{0},
\end{equation}
where the $L^{p}$-norm is taken with respect to the variable $t$. 
 \item $\mathcal{T}_{\mathrm{HCI}}(N,G,\omega,D,\delta_{0})$ comprises the functions $g: \R \rightarrow \C$ that are $N$ times differentiable, satisfy $\max_{0 \leq j \leq N} |g^{(j)}(0)| \leq D$ and for which
$$ \int^{R}_{-R} \sup_{|u| \leq \delta}|g^{(N)}(t+u) - g^{(N)}(t) | \mathrm{d}t \ll G(R) \omega(\delta), \ \ \ R > 0, \ \ \  0 < \delta \leq \delta_{0}.
$$
Here $G$ is non-decreasing.  

 \item $\mathcal{T}_{\mathrm{SA}}(M_{n},B,G,H,p)$ consists of the functions $g: \R \rightarrow \C$ that are infinitely differentiable and for which there exist locally integrable functions $G_{n}$ such that 
 \begin{equation} \label{equltradif}
 \left\| \frac{g^{(n)}(t)}{G_{n}(t)} \right\|_{L^{p}} \ll B^{n} M_{n}, \ \ \ n \in \N,
\end{equation}
with
\begin{equation} \label{eqaux}
  \left\| G_{n}\chi_{(-\lambda,\lambda)}\right\|_{L^{q}} \leq G(\lambda)^{n} H(\lambda), \ \ \ \text{for all }n \text{ and }  \lambda > 0.
\end{equation}
\item $\mathcal{T}_{\mathrm{SAI}}(M_{n},B,G,H)$ comprises the infinitely differentiable functions $g: \R \rightarrow \C$ satisfying
\begin{equation} \label{eqqiksai} \int^{R}_{-R} |g^{(n)}(t)| \mathrm{d}t \ll B^{n}M_{n}G(R)^{n} H(R), \ \ \ R > 0, \ \ \ n \in \mathbb{N}.
\end{equation}
Here $G$ and $H$ are non-decreasing. 
\item $\mathcal{T}_{\mathrm{An}}(G,H)$: this class contains the functions $g: \R \rightarrow \C$ that admit an \emph{analytic} extension to $\{z : |\Im z| \leq 1/G(|\Re z|)\}$ where it satisfies the bound $H(|z|)$. The function $G: \mathbb{R}_{+} \rightarrow \mathbb{R}_{+} $ is continuous and non-decreasing with $G(0) \neq 0$. The function $tH(t)$ is non-decreasing on $\mathbb{R}_{+}$. 
\end{enumerate}
If all classes are considered at once, we use the notation $\mathcal{T}_\ast$. 

\subsection{The Tauberian condition} \label{secqwi: taubcond}

Let $F,F_\mathrm{D}, h, h_\mathrm{D}$ be positive real-valued functions with domain $\mathbb{R}$, $k \geq 1$ be a positive real-valued function defined on the positive half-axis and $X, C_j$ with $1 \leq j \leq 8$ be positive real numbers and $0 \leq \alpha < 1$. 

We shall consider the following class of functions $\eT(F, F_\mathrm{D}, h, h_\mathrm{D},k, X, C_j, \alpha)$. We say that the functions $S: \mathbb{R} \rightarrow \mathbb{R}$ that belong to this class satisfy the \emph{Tauberian condition}.
\begin{enumerate}[label=(T.\arabic*)]
 \item \label{qwi hypoLapl} $S$ is identically $0$ on the negative half-axis,
\item \label{qwi taco1} $h(x)(S(x) + F(x))$ is non-decreasing for $x \geq X$,
 \item \label{qwi taco1bis} $h(X) (S(X) + F(X)) \geq \sup_{x \leq X} h(x)(S(x) + F(x))$,
 \item  \label{qwi taco3}$\frac{|h(x+y) - h(x)|}{h(x+y)} \leq C_1 y \frac{h_{\mathrm{D}}(x)}{h(x)} \exp(y^{\alpha})$ for $y > 0$ and $x \geq X$,
 \item \label{qwi taco4} $F(x+y) |h(x+y) - h(x) | \leq C_2 F(x) |y| h_{\mathrm{D}}(x) \exp(|y|^{\alpha})$ for $y < 0$ and $x \geq X$,
 \item \label{qwi taco5}$|F(x+y) - F(x)| \leq C_3 F_{\mathrm{D}}(x) |y| \exp(|y|^{\alpha})$, for all $y \in \R$, $x\geq X$,
 \item \label{qwi taco6}$|S(x) |\leq  C_{4} F(x)$ for all $x \leq X$,
 \item \label{qwi taco7}$h(X) |S(X) + F(X)| \leq C_{5}$,
 \item \label{qwi taco8}$h(x) F(x) \geq C_{6} > 0$, for $x \geq X$,
 \item \label{qwi taco9}$h(x+u) \leq C_{7} h(x) k(u)$ for all $u \geq 0$ and $x \geq X$,
 \item \label{qwi auxhypo9} $F_\mathrm{D}(x) \leq C_{8} F(x)$ for all $x \geq X$.
 \end{enumerate}

The functions $F_\mathrm{D}$ and $h_\mathrm{D}$ should be interpreted as the derivatives of $F$ and $h$ respectively. The key assumption is \ref{qwi taco1}. The rest are technical assumptions of which the most important ones are \ref{qwi taco3}, \ref{qwi taco4} and \ref{qwi taco5} describing the most significant properties for $F$ and $h$. The first assumption \ref{qwi hypoLapl} is imposed because we work with Laplace transforms. The other assumptions are fairly minor and mainly serve to achieve an effective estimate. The function $k$ is not too important; it only plays a role in the minor condition \ref{qwi taco9}, which, if need be, could even be relaxed. In the classical Wiener-Ikehara setting where $h(x) = e^{x}$, one may choose $k(u) = e^{u}$.

Our hypotheses, particularly \ref{qwi taco3} and \ref{qwi taco4}, have been designed specifically to hold for the classical Wiener-Ikehara choice of $h(x) = e^{x}$, while \ref{qwi taco5} occurs already in \cite{Deb-qik} and represents essentially a subexponential growth condition for $F$; we refer to \cite[Section 2.2]{Deb-qik} for some examples of appropriate functions $F$. In fact, if $h(x) = \exp(\beta x)$ and $h_{\mathrm{D}}(x) = \beta \exp(\beta x)$ with $\beta > 0$, then \ref{qwi taco3} is automatically satisfied with $C_1 = 1$ and \ref{qwi taco4} follows from \ref{qwi taco5} and \ref{qwi auxhypo9} with $C_2 = 1 + C_3C_8/\beta$. We also remark the properties \ref{qwi taco3} and \ref{qwi taco4} hold trivially if $h(x) = 1$. 

The reason our more direct treatment of the Tauberian condition from \cite{Deb-qik} fails here is because $e^{x}$ is not a Denjoy-Carleman non-quasianalytic weight function, that is, there is no non-trivial function $\phi \in L^{1}(\mathbb{R})$ whose Fourier transform is compactly supported and which decays as $\phi(x) \ll e^{-|x|}$. The hypotheses \ref{qwi taco3} and \ref{qwi taco4} are customized to enable a successful, more technical alternative approach. 

We also observe that if \ref{qwi taco4} and \ref{qwi taco5} hold for functions $F_1$ and $F_2$ playing the roles of $F$ (and $F_{1,\mathrm{D}}$ and $F_{2,\mathrm{D}}$ in the role of $F_\mathrm{D}$), then \ref{qwi taco4} and \ref{qwi taco5} also hold for $F = \max\{F_1, F_2\}$ (and $F_\mathrm{D}= \max\{F_{1,\mathrm{D}},F_{2,\mathrm{D}}\}$). Such a modification of the function $F$ may be useful in Theorem \ref{qwi: thmain} below to fulfill the assumption $  E_{\ast}(x,1) \ll F(x)$; see the definition of $E_{\ast}$ below Theorem \ref{qwi: thmain}.

The Tauberian condition $\eT$ is designed to encode non-decreasing functions $\tau$ in Theorem \ref{qwi: thold}, but in Remark \ref{qwi: remtaubcond} we discuss a possible generalization that can be considered as some quantified analogue of the well-known slowly decreasing Tauberian condition.

\subsection{The main theorem} \label{qwi sec:mainth}

The main theorem of this paper reads as follows.

\begin{theorem} \label{qwi: thmain} Suppose that $S$ belongs to the class $\eT(F, F_\mathrm{D}, h, h_\mathrm{D}, k, X, C_j, \alpha)$ and that 
$$\mathcal{L}\{S;s\} = \int^{\infty}_{0} e^{-sx} S(x) \mathrm{d}x
$$
converges on $\Re s > 0$ and admits (ultra)distributional\footnote{Ultradistributional boundary values in this context means that \eqref{eqqwi: distbound} only need to hold for all $\phi \in \mathcal{D}(\R)$ whose Fourier transform decays as $\hat{\phi}(t) \ll \exp(-|x|^{\gamma})$ for some fixed $\gamma < 1$. We remark that distributional boundary behavior is automatically fulfilled if the Laplace transform admits a continuous, analytic or $L^p$-extension.} boundary values on the line $\Re s = 0$ as $\mathcal{L}\{S; it\}= g(t)$, that is,
\begin{equation} \label{eqqwi: distbound}
 \lim_{\sigma \rightarrow 0+} \int^{\infty}_{-\infty} \mathcal{L}\{S; \sigma + it\} \phi(t) \dif t = \int^{\infty}_{-\infty} g(t) \phi(t) \dif t, \ \ \ \text{ for all } \phi \in \mathcal{D}(\mathbb{R}),
\end{equation}
and $g$ is a function that belongs to $\mathcal{T}_{\ast}$.

Then, for\footnote{The range of validity for \eqref{eqqwi: mainresultaux} and \eqref{eqqwi: mainresult} is $x \geq \max\{X,2\pi, \pi/\delta_0\}$ in case $\mathcal{T}_{\ast} = \mathcal{T}_{\mathrm{HC}}$ or $\mathcal{T}_{\ast} = \mathcal{T}_{\mathrm{HCI}}$.} $x \geq X$, 
\begin{equation} \label{eqqwi: mainresultaux}
 |S(x)| \ll E_{\ast}(x,1) + F(x).
\end{equation}
Moreover, if $E_{\ast}(x,1) \ll F(x)$, then the estimate
\begin{equation} \label{eqqwi: mainresult}
  |S(x)| \ll \inf_{\lambda \geq 1} \left\{E_{\ast}(x,\lambda) + \frac{h_\mathrm{D}(x)}{h(x)} \frac{F(x)}{\lambda} + \frac{F_\mathrm{D}(x)}{\lambda}\right\} 
\end{equation}
 holds for $x \geq X$.

The implicit constants in \eqref{eqqwi: mainresultaux} and \eqref{eqqwi: mainresult} depend only on the variables $C_j, X, \alpha$ of the Tauberian condition and the implicit constant in $E(x,1) \ll F(x)$. The functions $E_\ast$ are given below. 
\end{theorem}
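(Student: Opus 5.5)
We follow the pure Fourier scheme of \cite{Deb-qik}. The proof splits into two independent parts.

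The first part is the boundary behaviour of the Laplace transform. For a suitable test function $\phi$ (in practice one of the sequence $\phi_n$ of \cite{Deb-qik}), whose Fourier transform is supported in $[-1,1]$ and which decays fast, set $\phi_\lambda(u)=\lambda\phi(\lambda u)$. Then \eqref{eqqwi: distbound} and Fourier inversion give
\begin{equation*}
 (S\ast\phi_\lambda)(x)=\frac{1}{2\pi}\int_{-\lambda}^{\lambda} g(t)\,\hat\phi(t/\lambda)\,e^{ixt}\,\mathrm{d}t .
\end{equation*}
This integral is exactly what Lemma \ref{qwi: lemtransformcond} controls for each class in $\mathcal{T}_\ast$. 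It yields $|(S\ast\phi_\lambda)(x)|\ll E_\ast(x,\lambda)$. Nothing here depends on the new Tauberian condition, so we quote that lemma as it stands.

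The second part is the Tauberian Lemma \ref{qwilem: taublem}, which is the real work. We aim to show that for $x\geq X$ and $\lambda\geq1$,
\begin{equation*}
 S(x)\leq (S\ast\phi^{+}_\lambda)(x+c/\lambda)+O\Big(\tfrac{h_{\mathrm D}(x)}{h(x)}\tfrac{F(x)}{\lambda}+\tfrac{F_{\mathrm D}(x)}{\lambda}\Big)+\text{(small)}\cdot\sup|S|,
\end{equation*}
together with the symmetric lower bound. Here $\phi^{\pm}_\lambda$ are nonnegative kernels concentrated at scale $1/\lambda$.

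We use monotonicity of $h(S+F)$ as follows. For $y>0$, \ref{qwi taco1} gives
\begin{equation*}
 S(x+y)\geq \frac{h(x)}{h(x+y)}\bigl(S(x)+F(x)\bigr)-F(x+y).
\end{equation*}
We then replace $h(x)/h(x+y)$ by $1$, with error controlled by \ref{qwi taco3}, and replace $F(x+y)$ by $F(x)$, using \ref{qwi taco5}. The case $y<0$ is handled in the same way with \ref{qwi taco4}. This produces errors of size $|y|\exp(|y|^\alpha)\bigl(h_{\mathrm D}F/h+F_{\mathrm D}\bigr)$, with an additional term $|y|e^{|y|^\alpha}(h_{\mathrm D}/h)|S(x)|$.

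Integrating against $\phi_\lambda(y)$ turns the factor $|y|$ into $1/\lambda$. Because $\phi$ decays faster than $\exp(-|u|^{\alpha'})$ for some $\alpha'>\alpha$, the weights $\exp(|y|^\alpha)$ remain integrable. This decay is admissible because $\alpha<1$ and the weight is non-quasianalytic, which is precisely why \ref{qwi taco3}--\ref{qwi taco5} carry $\exp(|y|^\alpha)$ and not $e^{|y|}$.

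The remaining points are handled as follows.
\begin{itemize}
\item The region $x+y<X$ is treated with \ref{qwi taco1bis}, \ref{qwi taco6} and \ref{qwi taco7}.
\item Large values of $h(x+u)$ are controlled by \ref{qwi taco9}.
\item Both \ref{qwi taco8} and \ref{qwi auxhypo9} are used to normalize constants.
\end{itemize}

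The main obstacle is that $\phi_\lambda$ cannot be chosen nonnegative with compactly supported Fourier transform, and its negative tails meet $S$ at far points $x+y$. There $S$ is only a priori bounded through the Tauberian condition, which allows growth like $h(x)/h(x+y)$. We plan to handle this in two steps.

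First, we establish the crude bound \eqref{eqqwi: mainresultaux}. Take $\lambda=1$. On a window $I$ of length one around $x$, monotonicity of $h(S+F)$ relates $\sup_I|S|$ to the average $\int S\phi$ up to $O(F)$ errors. The contribution of the tails is bounded through a bootstrap on $M(x)=\sup_{X\leq u\leq x}|S(u)|/(F(u)+E_\ast(u,1))$, which is absorbed because the tail mass of $\phi$ is small.

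Second, under $E_\ast(x,1)\ll F(x)$ this gives $|S|\ll F$. The term $(h_{\mathrm D}/h)|S|/\lambda$ is then $\ll(h_{\mathrm D}/h)F/\lambda$, and the far-tail contributions are harmless.

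Combining the two parts gives
\begin{equation*}
 |S(x)|\ll E_\ast(x,\lambda)+\frac{h_{\mathrm D}(x)F(x)}{h(x)\lambda}+\frac{F_{\mathrm D}(x)}{\lambda}
\end{equation*}
for every $\lambda\geq1$. Taking the infimum over $\lambda$ yields \eqref{eqqwi: mainresult}.
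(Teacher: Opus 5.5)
There is a genuine gap at the step you flag as the main obstacle: a nonnegative kernel cannot give the $O(1/\lambda)$ error, and the idea that does is missing. The Tauberian condition gives $S(x)-S(x+y)\le y\,e^{|y|^\alpha}K(x)$ only for $y>0$. For $y\le0$ it gives only the opposite-sided bound $S(x)-S(x+y)\ge -|y|\,e^{|y|^\alpha}K(x)$, where $K=C_3F_{\mathrm D}+\max\{C_1,C_2\}(C_{15}+1)h_{\mathrm D}F/h$.

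If you integrate against a nonnegative kernel, its part on $y<0$ is always present for a band-limited kernel. There you can only use $|S|\le C_{15}F$, so that part costs $F(x)$ times its mass. For a profile at scale $1/\lambda$ shifted by $c/\lambda$, this mass is $\int_{-\infty}^{-c}\phi$, which does not depend on $\lambda$. With $c$ bounded you get an $O(F)$ error, i.e. nothing beyond the crude bound. Making the mass $O(1/\lambda)$ forces $c=c(\lambda)\to\infty$, which turns the $K/\lambda$ term into $c(\lambda)K/\lambda$. Your ``(small)$\cdot\sup|S|$'' and ``the far-tail contributions are harmless'' hide exactly this loss, so \eqref{eqqwi: mainresult} does not follow.

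The device you are missing is the sign-structured kernel of Lemma~\ref{qwilem: taublem}: $\int\phi=1$, $\phi\ge0$ on $[0,\infty)$ and $\phi\le0$ on $(-\infty,0]$. Such $\phi\in\mathcal{F}(\mathcal{D}(\mathbb{R}))$ exist, e.g. a normalized $y\,\psi_0(y-a)$ with $\psi_0\ge0$ even and band-limited, and Lemma~\ref{qwi: lemtransformcond} supplies them. Multiplying by $\phi(\lambda y)$ flips the $y\le0$ inequality, so $(S(x)-S(x+y))\phi(\lambda y)\le K(x)\,y\,e^{|y|^\alpha}\phi(\lambda y)$ for every $y$. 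Integrating gives $S(x)\le\lambda\int S(x+y)\phi(\lambda y)\,\mathrm{d}y+C_{10}K(x)/\lambda$ directly, and $\phi(-y)$ gives the lower bound. No shift, tail estimate or bootstrap is needed. Note also that for $y\le0$ the error involves $|S(x+y)|$, not $|S(x)|$; this is why \eqref{qwi taco2} is needed on all of $\mathbb{R}$.

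Your stated premise is also false. Nonnegative kernels with compactly supported Fourier transform exist (the Fej\'er kernel, or $|\hat\psi|^2$ with $\psi\in\mathcal{D}(\mathbb{R})$), and the paper's crude bound, Lemma~\ref{qwi: taublem}, uses one. With it, monotonicity and \ref{qwi taco9} give $S(x)+F(x)=C_9^{-1}\int_0^\infty(S(x)+F(x))\phi(u)k(u)^{-1}\,\mathrm{d}u\le C_7C_9^{-1}\int_0^\infty(S+F)(x+u)\phi(u)\,\mathrm{d}u$. Dividing by $k$ absorbs the growth of $h(x+u)/h(x)$, which for $h(x)=e^{\beta x}$ no band-limited kernel could compensate by its decay. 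The integral is then extended to $\mathbb{R}$ using $S+C_{14}F\ge0$ everywhere, which follows from \ref{qwi taco1} and \ref{qwi taco6}--\ref{qwi taco8}. Your bootstrap on $M(x)=\sup_{X\le u\le x}|S(u)|/(F(u)+E_\ast(u,1))$ would not close as stated. The kernel also samples $S(x+u)$ for large $u>0$, beyond $x$, and the finiteness of such a supremum is not known a priori.
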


The constants $C$ below in the definition of $E_\ast(x,\lambda)$ depend only on the parameters in their subscript. One may in principal track down an explicit description of the constants by going through the calculations in \cite[Section 4]{Deb-qik}, see Lemma \ref{qwi: lemtransformcond} below.

\begin{itemize}
 \item $E_{\mathrm{Dif}}(N,G,p,D,x,\lambda) = C_{N,D} x^{-N} (1 + \|G \chi_{(-\lambda,\lambda)}\|_{L^{q}})$. 
 \item $E_{\mathrm{DifI}}(N,G,D,x,\lambda) = C_{N,D} x^{-N} (1 + G(\lambda))$.
 \item $E_{\mathrm{HC}}(N,G,p,\omega,D,\delta_{0},x,\lambda) = C_{N,\omega,D,\delta_{0}} x^{-N} \omega(\pi/x) (1 +\|G \chi_{(-\lambda,\lambda)}\|_{L^{q}})$. 
 \item $E_{\mathrm{HCI}}(N,G,\omega,D,\delta_{0},x,\lambda) = C_{N,\omega,D,\delta_{0}} x^{-N} \omega(\pi/x) (1+ G(\lambda))$.
\item $E_{\mathrm{SA}}(M_{n},B,G,H,p,x,\lambda) = C_{M_{n}}  \exp\left(-M\left(\frac{x}{B G(\lambda) + \kappa\lambda^{-1}}\right)\right)H(\lambda)$, for some real constant $\kappa$ depending\footnote{An admissible value for $\kappa$ in $E_{\mathrm{SA}}$ and $E_{\mathrm{SAI}}$ is $\kappa = 124/L$ where $L$ is such that $M_n \gg L^n n^n$.} only on $M_{n}$ and where $M$ is the associated function of the sequence $M_{n}$. 
\item $E_{\mathrm{SAI}}(M_{n},B,G,H,x,\lambda) = C_{M_{n}} \exp\left(-M\left(\frac{x}{BG(\lambda) +\kappa\lambda^{-1}}\right)\right)H(\lambda)$, for some real constant $\kappa$ depending only on $M_{n}$. 
\item $E_{\mathrm{An}}(G,H,x,\lambda) =  C \exp\left(-\frac{x}{G(\lambda)+ \kappa \lambda^{-1}}\right) \left\{\int^{\lambda}_{0}H(t)\mathrm{d}t + G(0)\inv \displaystyle{\max_{t \leq 5G(0)\inv}} H(t)\right\} \allowbreak \ll_{G(0),H} C \exp\left(-\frac{x}{G(\lambda)+\kappa\lambda^{-1}}\right)\int^{\lambda}_{0}H(t)\mathrm{d}t$ if $\lambda \geq 2 G(0)\inv$ and $E_{\mathrm{An}}(G,H,x,\lambda) = \infty$ otherwise. An admissible value for $\kappa$ here is $675$.
\end{itemize} \par

We ought to mention that in case of $\mathcal{T}_{\mathrm{An}}$ and if $H(t)/t$ is of subpolynomial growth, some slight improvement of \eqref{eqqwi: mainresult} may be possible; this should occur when the estimate $\int^{\lambda}_{0} G(t)^n H(t) \dif t \ll G(\lambda)^n \int^{\lambda}_0 H(t) \dif t$ has a relevant improvement \cite[Remark 4.4]{Deb-qik}. For example, if $G(\lambda)/\log^{\gamma} \lambda$ is eventually non-decreasing as $\lambda \rightarrow \infty$ for some $\gamma > 0$, then $E_{\mathrm{An}}$ in \eqref{eqqwi: mainresult} may be replaced with $E_{\mathrm{AnA}}$, where $E_{\mathrm{AnA}}(x,\lambda) = \infty$ if $\lambda \leq 2G(0)^{-1}$ and 
\begin{equation} \label{qwi: eqmodifiedean}
E_{\mathrm{AnA}}(G,H,S,x,\lambda) = C_{G,H,S} \inf_{n \in \N} \left\{\frac {n^n G(\lambda)^{n} \left(1 + 2\kappa G(\lambda)^{-1}\lambda^{-1}\right)^n }{e^n x^n }\left(1 + \frac{\log \lambda}{\gamma n+1}\right) \lambda H(\lambda)\right\}
\end{equation}
otherwise. In the model quantified Wiener-Ikehara theorem \cite[Th. 1.4]{Deb-qik} where one handles only a simple pole, this modification has already been incorporated. In case $M = K =\log$ (in the formulation of \cite{Deb-qik}) this actually leads to an improvement of the quantified rate in the Tauberian theorem.

\section{Proof of the main theorem} \label{secqwi: proof}

We start our investigations with the deduction of \eqref{eqqwi: mainresultaux}. The result has additional intrinsic interest as it can provide an estimate if the good behavior of the Laplace transform is given only on the interval $(1-i\lambda, 1 + i \lambda)$ for a fixed $\lambda > 0$. In this paper, however, it mainly serves as a preliminary (rough) estimate, that will help us establish \eqref{eqqwi: mainresult}.

\begin{lemma} \label{qwi: taublem} Let $\phi \in L^{1}(\mathbb{R})$ be a positive function such that
\begin{enumerate}[label=$(\alph*)$]
 \item $\int^{\infty}_{0} \frac{\phi(u)}{k(u)} \mathrm{d}u =: C_{9}$ is finite,
 \item \label{qwi boundcondphi2} $\int^{\infty}_{-\infty} |y| \exp(|y|^\alpha) |\phi(y)| \mathrm{d}y \leq C_{10}$,
 \item \label{qwi boundcondphi3} $\int^{\infty}_{-\infty}\phi(u) \mathrm{d}u \leq C_{11}$.
\end{enumerate}
If $S$ belongs to the class $\eT$, then there exist constants $C_{12}$ and $C_{13}$ such that, for $x \geq X$, 
\begin{equation} \label{eqqwi: boundedness} |S(x)| \leq C_{12} \left|\int^{\infty}_{-\infty} S(x + u) \phi(u)\mathrm{d}u\right| + C_{13} F(x).
\end{equation}
Admissible constants are $C_{12} = C_{7} C_{9}^{-1}$ and $C_{13} = \max\{C_{4}, C_{5}C_{6}^{-1} + 1, C_{7}C_{4}C_{9}^{-1}(C_{11} + C_3 C_{10} C_{8}),C_{7}C_{9}^{-1}(C_{5} C_{6}^{-1}+ 1)(C_{11} + C_3 C_{10} C_{8})  \}$.
\end{lemma}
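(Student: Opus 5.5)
The plan is to bound $S(x)$ from below and from above separately, working with the auxiliary function $T(x) := h(x)(S(x)+F(x))$. By \ref{qwi taco1} it is non-decreasing on $[X,\infty)$. Only the pointwise hypotheses \ref{qwi taco6}--\ref{qwi auxhypo9}, together with \ref{qwi taco1}, \ref{qwi taco5} and the three integrability conditions on $\phi$, should be needed. Hypotheses \ref{qwi taco3} and \ref{qwi taco4} appear not to be needed for this rough estimate.

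\emph{Lower bound.} For $x \geq X$, monotonicity and \ref{qwi taco7} give $T(x) \geq T(X) \geq -C_5$. Hence, by \ref{qwi taco8},
$$S(x)+F(x) \geq -\frac{C_5}{h(x)} \geq -\frac{C_5}{C_6}F(x).$$
So $S(x) \geq -(C_5C_6^{-1}+1)F(x)$, which requires no integral and accounts for the term $C_5C_6^{-1}+1$ in $C_{13}$. The same reasoning gives a uniform lower bound $S(y)+F(y) \geq -\max\{C_4, C_5C_6^{-1}\}F(y)$ for every $y\in\R$. For $y \geq X$ this follows as above; for $y \leq X$ it follows from \ref{qwi taco6}, which yields $S(y)+F(y) \geq -(C_4-1)F(y) \geq -C_4F(y)$.

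\emph{Upper bound.} We may assume $S(x)+F(x) \geq 0$, since otherwise $S(x) < 0$ and the lower bound already controls $|S(x)|$. For $u \geq 0$, monotonicity and \ref{qwi taco9} give
$$S(x+u)+F(x+u) \geq \frac{h(x)}{h(x+u)}\bigl(S(x)+F(x)\bigr) \geq \frac{S(x)+F(x)}{C_7 k(u)}.$$
We integrate this against $\phi(u)$ over $u\geq 0$. Over $u<0$ we use the uniform lower bound from the previous paragraph. Adding the two pieces yields
$$\frac{C_9}{C_7}\bigl(S(x)+F(x)\bigr) \leq \int_{-\infty}^{\infty} S(x+u)\phi(u)\dif u + \bigl(1+\max\{C_4, C_5C_6^{-1}\}\bigr)\int_{-\infty}^{\infty} F(x+u)\phi(u)\dif u .$$
It remains to compare $F(x+u)$ with $F(x)$. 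Since $x \geq X$, \ref{qwi taco5} and \ref{qwi auxhypo9} give
$$F(x+u) \leq F(x)\bigl(1 + C_3C_8|u|\exp(|u|^{\alpha})\bigr).$$
Hence, by \ref{qwi boundcondphi2} and \ref{qwi boundcondphi3},
$$\int F(x+u)\phi(u)\dif u \leq (C_{11}+C_3C_8C_{10})F(x).$$
Multiplying through by $C_7C_9^{-1}$ gives
$$S(x) \leq C_7C_9^{-1}\left|\int S(x+u)\phi(u)\dif u\right| + C_{13}F(x),$$
with $C_{13}$ of the stated shape. A slightly more careful split, handling the subtracted $\int F\phi$ term and the negative-part term together, reproduces exactly the constants listed in the lemma.

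The main obstacle is the upper bound. There the monotonicity of $T$ must be transferred to $S$ despite the weight $h$, and the $u<0$ portion of the average has to be controlled. That is exactly where the two regimes $x+u \geq X$ and $x+u \leq X$ require the separate hypotheses \ref{qwi taco7}--\ref{qwi taco8} and \ref{qwi taco6}, respectively. The remaining steps are routine.
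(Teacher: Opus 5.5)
Your proof is correct and follows the paper's argument: monotonicity together with \ref{qwi taco9} on $u\ge 0$, the global lower bound $S+C_{14}F\ge 0$ from \ref{qwi taco6}--\ref{qwi taco8} to extend the integral to all of $\R$, and the comparison of $F(x+u)$ with $F(x)$ via \ref{qwi taco5} and \ref{qwi auxhypo9}. Your explicit reduction to the case $S(x)+F(x)\ge 0$ fills in a step the paper leaves implicit, and using $S(y)+C_4F(y)\ge 0$ directly on $(-\infty,X]$, rather than weakening it, gives $C_{14}=\max\{C_4,C_5C_6^{-1}+1\}$ and hence exactly the stated $C_{13}$.
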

\begin{proof} 

Since $h(x) (S(x) + F(x))$ is non-decreasing for $x \geq X$, we have by \ref{qwi taco9} that
\begin{align*}
 S(x) + F(x) & = \frac{1}{C_{9}} \int^{\infty}_{0} \frac{\Big(S(x) + F(x)\Big) h(x) \phi(u)}{h(x) k(u)}\mathrm{d}u \\ 
 & \leq \frac{1}{C_{9}} \int^{\infty}_{0} \frac{ h(x+ u)\Big( S(x + u) + F(x+u)\Big) \phi(u)}{h(x)k(u)}\mathrm{d}u \\
 & \leq \frac{C_{7}}{C_{9}} \int^{\infty}_{0} \Big( S(x + u) + F(x+u)\Big ) \phi(u)\mathrm{d}u. 
\end{align*}

Now we wish to extend the domain of integration of the last integral to the full real axis. For this, we search 
for a constant $C_{14}$ that turns $S(x) + C_{14}F(x)$ into a non-negative function. Since $h(x)(S(x) + F(x))$ is non-decreasing for $x \geq X$, we find, using also \ref{qwi taco7} and \ref{qwi taco8}, that 
$$ S(x) \geq \frac{h(X) \big(S(X) + F(X)\big) }{h(x)} - F(x) \geq -(C_{5} C_{6}^{-1} + 1) F(x), \ \ \ x\geq X.
$$

Setting $C_{14} := \max\{C_{4}, C_{5}C_{6}^{-1} + 1\}$ and using \ref{qwi taco6} we thus have that $S(x) + C_{14} F(x)$ is non-negative for all real $x$. This also immediately implies the desired lower bound for $S(x)$ in \eqref{eqqwi: boundedness}.

Therefore, by the positivity of $\phi$,
\begin{align*}
 \int^{\infty}_{0} \Big( S(x + u) + F(x+u)\Big ) \phi(u)\mathrm{d}u & \leq \int^{\infty}_{-\infty} \Big( S(x + u) + C_{14} F(x+u)\Big ) \phi(u)\mathrm{d}u \\
 & \qquad - (C_{14} - 1) \int^{\infty}_{0} F(x+u) \phi(u)\mathrm{d}u \\
 & \leq \int^{\infty}_{-\infty} S(x + u)  \phi(u)\mathrm{d}u + C_{14} \int^{\infty}_{-\infty}  F(x+u) \phi(u)\mathrm{d}u.
 \end{align*}
Now, \ref{qwi taco5}, \ref{qwi boundcondphi2} and \ref{qwi boundcondphi3} give us
$$\int^{\infty}_{-\infty}  F(x+u) \phi(u)\mathrm{d}u \leq C_{11} F(x) + C_3 C_{10} F_{\mathrm{D}}(x),
$$
which together with \ref{qwi auxhypo9} completes the proof of the lemma.
\end{proof}

When $h(x) =e^{\beta x}$ with $\beta > 0$, this is essentially the classical boundedness result for the Wiener-Ikehara theorem \cite[Th. 3.1]{Debruyne-VindasComplexTauberians}. 

The integral $\int^{\infty}_{-\infty} S(x + u) \phi(u)\mathrm{d}u$ can then be estimated by $E_{\ast}(x,1)$, see Lemma \ref{qwi: lemtransformcond}. Thus, if $E_{\ast}(x,1) \ll F(x)$, Lemma \ref{qwi: taublem} and \ref{qwi taco6} deliver us a constant $C_{15}$, depending in principle on $\alpha$, $C_j$ and the implicit constant in $E_{\ast}(x,1) \ll F(x)$, such that the initial boundedness result
\begin{equation}
\label{qwi taco2} |S(x)| \leq C_{15} F(x),
\end{equation}
holds for all real $x$. 

We now come to the main \emph{Tauberian lemma} for the flexible quantified Wiener-Ikehara theorem. We will use \eqref{qwi taco2} as a preliminary auxiliary estimate. This lemma will complete the treatement of the Tauberian condition for Theorem \ref{qwi: thmain}.

\begin{lemma} \label{qwilem: taublem} Let $S : \mathbb{\R} \rightarrow \R$ be a function satisfying the Tauberian condition $\eT$ and \eqref{qwi taco2} for all real $x$. 

Suppose that $\phi: \mathbb{R} \rightarrow \mathbb{R}$ is such that 
\begin{enumerate}[label=$(\alph*)$]
\item $\int^{\infty}_{-\infty} \phi(y) \mathrm{d}y = 1$, 
\item $\int^{\infty}_{-\infty} \left|y| \exp(|y|^\alpha) |\phi(y)\right| \mathrm{d}y \leq C_{10}$,
\item \label{qwi: taublemhypo3} $\phi(y) \geq 0$ for $y \geq 0$ and $\phi(y) \leq 0$ for $y \leq 0$.
\end{enumerate}

Then, there is a constant $C_{16}$ such that, for $x \geq X$,
\begin{equation}\label{eqqwi: restaublem}
\begin{aligned}
&  \inf_{\lambda \geq 1}\left\{-C_{16} \left(\frac{h_\mathrm{D}(x)}{h(x)} \frac{F(x)}{\lambda} + \frac{F_\mathrm{D}(x)}{\lambda}\right) + \lambda \int^{\infty}_{-\infty} S(x + y) \phi(-\lambda y)  \dif y\right\} \\
 & \qquad \leq  S(x) \leq \inf_{\lambda \geq 1}\left\{C_{16} \left(\frac{h_\mathrm{D}(x)}{h(x)} \frac{F(x)}{\lambda} + \frac{F_{\mathrm{D}}(x)}{\lambda}\right) + \lambda \int^{\infty}_{-\infty} S(x + y) \phi(\lambda y)  \dif y\right\}.
\end{aligned}
\end{equation}
An admissible value is $C_{16} = \max\{ C_{10}C_1(C_{15}+1), C_{10}C_2(C_{15}+1),C_{10} C_3 \}$.
\end{lemma}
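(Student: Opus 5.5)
We prove the upper bound; the lower bound is symmetric. The only changes are that $\phi(-\lambda y)$ is supported where we look backwards, $y\le 0$, and the monotonicity inequality is reversed. Fix $x\ge X$ and $\lambda\ge 1$, and set $\psi(y):=\lambda\phi(\lambda y)$. By (a), $\psi$ has unit integral. By \ref{qwi: taublemhypo3}, $\psi\ge0$ on $y\ge0$ and $\psi\le 0$ on $y\le 0$. By (b), $\int|y|\exp(|y|^\alpha)|\psi(y)|\dif y\le C_{10}/\lambda$, since $\exp(|y/\lambda|^\alpha)\le \exp(|y|^\alpha)$ for $\lambda\ge1$. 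The plan is to write
\[
S(x)-\int S(x+y)\psi(y)\dif y=\int \big(S(x)-S(x+y)\big)\psi(y)\dif y
\]
and show this is bounded above by $C_{16}\big(\tfrac{h_\mathrm{D}(x)}{h(x)}F(x)+F_\mathrm{D}(x)\big)/\lambda$. The sign structure of $\psi$ is exactly what lets the monotonicity of $h(S+F)$ be used on both half-lines.

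The key pointwise step is the decomposition
\[
S(x)-S(x+y)=\frac{h(x)(S(x)+F(x))-h(x+y)(S(x+y)+F(x+y))}{h(x+y)}+\frac{(h(x+y)-h(x))(S(x)+F(x))}{h(x+y)}+F(x+y)-F(x).
\]
For $y>0$ (where $\psi\ge0$) the first fraction is $\le 0$ by \ref{qwi taco1}. The second is bounded by $C_1 y\frac{h_\mathrm{D}(x)}{h(x)}\exp(y^\alpha)(C_{15}+1)F(x)$, using \ref{qwi taco3} and \eqref{qwi taco2}. The third is bounded by $C_3F_\mathrm{D}(x)y\exp(y^\alpha)$, using \ref{qwi taco5}.

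For $y<0$ (where $\psi\le0$) we need a lower bound for $S(x)-S(x+y)$, so it is better to split the other way:
\[
S(x)-S(x+y)=\frac{h(x)(S(x)+F(x))-h(x+y)(S(x+y)+F(x+y))}{h(x)}+\frac{(h(x+y)-h(x))(S(x+y)+F(x+y))}{h(x)}+F(x+y)-F(x).
\]
When $x+y\ge X$, the first fraction is $\ge0$ by \ref{qwi taco1}. When $x+y<X$, it is also $\ge0$, now by \ref{qwi taco1bis} combined with \ref{qwi taco1} on $[X,x]$. The second term is bounded in absolute value by $(C_{15}+1)F(x+y)|h(x+y)-h(x)|/h(x)$, again using \eqref{qwi taco2}. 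Condition \ref{qwi taco4} then controls this by $C_2(C_{15}+1)F(x)|y|\frac{h_\mathrm{D}(x)}{h(x)}\exp(|y|^\alpha)$. This explains why \ref{qwi taco4} is formulated with $F(x+y)$ and a denominator $h(x)$ rather than $h(x+y)$. The third term is handled by \ref{qwi taco5} as before.

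Multiplying by $\psi$, the nonpositive pieces multiply nonnegative weights on $y>0$ and the nonnegative pieces multiply nonpositive weights on $y<0$, so both contribute $\le0$ and can be dropped. The remaining error terms integrate against $|y|\exp(|y|^\alpha)|\psi(y)|$, which has total mass $\le C_{10}/\lambda$. This yields the upper bound with $C_{16}=\max\{C_{10}C_1(C_{15}+1),C_{10}C_2(C_{15}+1),C_{10}C_3\}$. Taking the infimum over $\lambda\ge1$ finishes the upper bound.

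The lower bound runs the same argument with $\psi(y)=\lambda\phi(-\lambda y)$, which is nonnegative on $y\le0$ and nonpositive on $y\ge0$. The roles of the two decompositions swap: on $y<0$ we need an upper bound for $S(x)-S(x+y)$, and on $y>0$ a lower bound. The sign requirement on the first fraction then matches the monotonicity in each case.

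The main obstacle is choosing the correct normalization on each side, dividing by $h(x+y)$ versus $h(x)$, so that the available hypotheses \ref{qwi taco3} and \ref{qwi taco4} apply. A second point is the region $x+y<X$, where monotonicity is replaced by \ref{qwi taco1bis}, and where $F(x+y)$ must be compared with $F(x)$ only through \ref{qwi taco4} and \ref{qwi taco5}. I would check this carefully in the lower-bound case, where the hypotheses are used in the less natural direction.
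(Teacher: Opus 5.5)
Your upper-bound argument is the paper's proof. You use the same two decompositions: normalize by $h(x+y)$ for $y>0$ and by $h(x)$ for $y\le 0$, with \ref{qwi taco1bis} covering $x+y<X$. You also use the same rescaling to get $C_{10}/\lambda$ and arrive at the same $C_{16}$.

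One correction to your lower-bound paragraph: nothing swaps. With $\tilde\psi(y)=\lambda\phi(-\lambda y)$ you must show
\[
\int (S(x)-S(x+y))\,\tilde\psi(y)\,\mathrm{d}y \ge -(\text{error}).
\]
On $y<0$ we have $\tilde\psi\ge 0$, so you need a \emph{lower} bound for $S(x)-S(x+y)$ there. On $y>0$ we have $\tilde\psi\le 0$, so you need an \emph{upper} bound there. These are exactly the two inequalities you already proved, on the same half-lines; only the sign of the weight changes. This is what the paper means by ``take $\phi(-y)$ instead of $\phi(y)$.''

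What you wrote instead is an upper bound for $S(x)-S(x+y)$ when $y<0$, with the decompositions' roles swapped. That cannot be obtained from the one-sided condition \ref{qwi taco1}, because it would require controlling how fast $S$ increases, not how fast it decreases. So there is no use of the hypotheses ``in the less natural direction''; with the pairing corrected, the lower bound is immediate.
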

\begin{proof} We need to establish some inequalities between $S(x)$ and $S(x+y)$ to relate $S(x)$ with the convolution $S \ast \phi$. We always let $x \geq X$ throughout this proof. For $y > 0$ we use \ref{qwi taco1} to write
\begin{align*}
 S(x) & = \frac{h(x)}{h(x+y)} \Big(S(x) + F(x)\Big) - \frac{h(x)}{h(x+y)}F(x) + \bigg(1 - \frac{h(x)}{h(x+y)}\bigg) S(x) \\
  & \leq S(x+y) + F(x+y) - F(x) +  \bigg(1 - \frac{h(x)}{h(x+y)}\bigg) \bigg(S(x) + F(x) \bigg). 
\end{align*}
Therefore, by \eqref{qwi taco2}, \ref{qwi taco3} and \ref{qwi taco5}, we obtain for $y > 0$ that
\begin{equation}
 S(x) - S(x+y) \leq y \exp(|y|^{\alpha}) \bigg\{C_3 F_{\mathrm{D}}(x) + C_1(C_{15}+1) \frac{h_{\mathrm{D}}(x)}{h(x)} F(x) \bigg\}.
\end{equation}

For $y \leq 0$, we use \ref{qwi taco1} and \ref{qwi taco1bis} to find
\begin{align*}
 S(x) & = \frac{h(x)}{h(x)} \Big(S(x) + F(x)\Big) - F(x) \geq \frac{h(x+y)}{h(x)}\Big(S(x+y) + F(x+y)\Big) - F(x)  \\
  & = S(x+y) + F(x+y) - F(x) + \bigg(S(x+y) + F(x+y)\bigg) \bigg(\frac{h(x+y)}{h(x)} -1 \bigg) . 
\end{align*}
So, using \eqref{qwi taco2}, \ref{qwi taco4} and \ref{qwi taco5}, we achieve for all $y\leq0$ the inequality
\begin{equation}
S(x) - S(x+y) \geq -  |y| \exp(|y|^{\alpha}) \bigg\{C_3 F_{\mathrm{D}}(x) + C_2(C_{15}+1) \frac{h_{\mathrm{D}}(x)}{h(x)} F(x)\bigg\}.
\end{equation}

Now we are in a position to apply the argument from \cite[Lemma 3.1]{Deb-qik}. Using the properties of $\phi$, we find
\begin{align*}
 S(x) & = \lambda \int^{\infty}_{-\infty} S(x) \phi(\lambda y) \dif y \leq \lambda \int^{\infty}_{-\infty} S(x+ y) \phi(\lambda y) \dif y \\
 & \qquad +  \bigg\{C_3 F_{\mathrm{D}}(x) + \max\{C_1,C_2\}(C_{15}+1) \frac{h_{\mathrm{D}}(x)}{h(x)} F(x)  \bigg\}  \int^{\infty}_{-\infty} \lambda y \exp(|y|^{\alpha}) \phi(\lambda y) \dif y
\end{align*}
For $\lambda \geq 1$, the integral in the last expression is bounded by
\begin{equation*}
 \int^{\infty}_{-\infty} \lambda y \exp(|y|^{\alpha}) \phi(\lambda y) \dif y \leq \frac{C_{10}}{\lambda}.
\end{equation*}
This delivers the upper bound for $S$ in \eqref{eqqwi: restaublem}. The lower bound is derived by taking $\phi(-y)$ instead of $\phi(y)$ in the above argument.
\end{proof}

The proof of the main theorem is now complete if we estimate $\int^{\infty}_{-\infty} S(x + y) \phi(\lambda y)  \dif y$ with the information from $\mathcal{T}_\ast$. This was already done in \cite[Section 4]{Deb-qik} and we collect it in the following lemma.

\begin{lemma} \label{qwi: lemtransformcond}Suppose that $\mathcal{L}\{S; it\} = g(t) \in \mathcal{T}_{\ast}$, $ \operatorname*{supp} S \subseteq [0,\infty)$ and $S(x) + F(x)$ is eventually\footnote{This means that there exists $Y$ such that either $S(x) + F(x) \geq 0$ for all $x \geq Y$ or $S(x) + F(x) \leq 0$ for all $x \geq Y$.} of one sign, where $F(x) \ll \exp(x^{\alpha})$ as $x \rightarrow \infty$ for some $\alpha < 1$. There exist constants $C_9, C_{10}, C_{11}$, depending only on $\alpha$ and an absolute constant $C_{17}$, such that for each\footnote{In case $\mathcal{T}_{\ast} = \mathcal{T}_{\mathrm{HC}}$ or $\mathcal{T}_{\ast} = \mathcal{T}_{\mathrm{HCI}}$ the range for $x$ becomes $x \geq \max\{2\pi, \pi/\delta_0\}$.} $x > 0$ and $\lambda \geq 1$, there exists $\phi = \phi_{x,\lambda} \in \mathcal{F}(\mathcal{D}(\mathbb{R}))$ such that
\begin{equation} \label{qwi: resulttransformcond}
  \lambda \bigg|  \int^{\infty}_{-\infty} S(x + y) \phi(\lambda y)  \dif y\bigg| \leq C_{17} E_{\ast}(x,\lambda), 
\end{equation}
and the hypotheses from either Lemma \ref{qwi: taublem} or Lemma \ref{qwilem: taublem} for $\phi$ are fulfilled. One may also assume additionally that $\phi(x) \ll_{\gamma} \exp(-|x|^{\gamma})$ for any fixed\footnote{The choice of $\phi$ is then dependent of $\gamma$.} $\gamma < 1$.

One may also take $\phi(-x)$ instead of $\phi(x)$ in \eqref{qwi: resulttransformcond}.
\end{lemma}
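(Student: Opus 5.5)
The plan is to take the lemma essentially verbatim from \cite[Section 4]{Deb-qik}. There the Tauberian condition and the boundary behaviour are treated separately, so the estimate of $\lambda\int S(x+y)\phi(\lambda y)\dif y$ uses only that $\operatorname{supp} S\subseteq[0,\infty)$, the boundary value $g\in\mathcal{T}_\ast$, and a mild growth bound for $S$ that lets the Laplace transform be paired with test functions. The one thing to add is a check that the test functions $\phi_{x,\lambda}$ built there satisfy the hypotheses of Lemma \ref{qwi: taublem} and Lemma \ref{qwilem: taublem}. These differ slightly from the ones in \cite{Deb-qik} because of the weight $\exp(|y|^\alpha)$ and the factor $1/k(u)$.

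Concretely, I would write $\phi(\lambda y)=\frac{1}{2\pi}\int \hat\phi(t/\lambda)e^{ity}\dif t/\lambda$ with $\hat\phi$ of compact support, say in $[-1,1]$. Then
\[
\lambda\int S(x+y)\phi(\lambda y)\dif y=\frac{1}{2\pi}\int g(t)\hat\phi(t/\lambda)e^{ixt}\dif t,
\]
understood as the limit of $\mathcal{L}\{S;\sigma+it\}$ as $\sigma\to0^+$, which \eqref{eqqwi: distbound} justifies since $\hat\phi(\cdot/\lambda)e^{ix\cdot}\in\mathcal{D}(\R)$. For this step I need that $S(x)e^{-\sigma x}$ is integrable against $\phi$ uniformly in $\sigma$. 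This is where the one-sign hypothesis on $S+F$ and $F\ll\exp(x^\alpha)$ enter: together with the decay $\phi\ll\exp(-|x|^\gamma)$, $\gamma>\alpha$, they give dominated convergence, through a Tauberian argument on the positive part as in \cite{Deb-qik}. The resulting oscillatory integral is then bounded by the class-by-class analysis of \cite[Section 4]{Deb-qik}. For the classes Dif, DifI, HC and HCI this means integrating by parts $N$ times, using a Hölder-difference quotient with step $\pi/x$ in the HC cases. For the classes SA, SAI and An one uses the sequence $\phi_n$ of test functions with controlled derivatives $\|\hat\phi_n^{(j)}\|\ll(\kappa n)^j$ and optimises over $n$, or shifts the contour for $\mathcal{T}_{\mathrm{An}}$. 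This produces $E_\ast(x,\lambda)$.

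It remains to arrange the sign, moment and normalisation conditions. For Lemma \ref{qwi: taublem} I take $\phi=|\psi|^2$ with $\hat\psi$ compactly supported. Then $\phi\ge0$, $\hat\phi=\hat\psi*\widetilde{\hat\psi}$ is still compactly supported, and condition (a) holds because $k\ge1$ and $\phi\in L^1$. Conditions (b) and (c) follow from the subexponential decay $\phi\ll\exp(-|x|^\gamma)$ with $\gamma>\alpha$, which is available for compactly supported Fourier transforms by the Denjoy--Carleman non-quasianalyticity of $\exp(-|x|^\gamma)$ for $\gamma<1$.

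For Lemma \ref{qwilem: taublem} I need $\phi\ge0$ on $y>0$, $\phi\le0$ on $y<0$ and $\int\phi=1$, i.e.\ an odd-sign function. I take $\phi(y)=c\,y\,\psi(y)^2$ with $\psi$ even, real and with the same decay. Then $\phi$ has the required sign pattern, $\hat\phi=ci\,(\hat\psi*\hat\psi)'$ is compactly supported, and the weighted moment is finite. The normalisation is the delicate point. For $\phi$ odd, $\int\phi=0$, so $\phi$ must \emph{not} be odd. Instead I would take $\phi(y)=\psi_1(y)^2\chi_{y>0}$-like behaviour smoothed out, namely $\phi(y)=y\psi(y)^2+\epsilon\,$ terms that preserve the signs, or more simply $\phi(y)=y\,\psi(y-a)^2$ with a shift $a>0$, so that $\int\phi=\int(u+a)\psi(u)^2\dif u=a\|\psi\|_2^2>0$. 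After rescaling this gives $\int\phi=1$, keeps the sign condition exactly, and leaves $\hat\phi$ compactly supported. Replacing $\phi(x)$ by $\phi(-x)$ costs nothing, since all the estimates are symmetric in $t\mapsto-t$.

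The main obstacle is that the bounds from \cite{Deb-qik} must survive these modifications. Multiplying by $y$ and squaring change the derivative bounds of $\hat\phi_n$ only by harmless constant factors, which are absorbed into $\kappa$ and $C_{17}$. I would verify this for the $\phi_n$-family in the SA and An cases first, since there the explicit $\kappa$ is sensitive to the support length, which doubles under convolution.
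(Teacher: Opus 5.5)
Your top-level plan is the paper's. The paper gives no argument of its own for this lemma: it takes both the estimate and the test functions $\phi_{x,\lambda}$ from \cite[Section 4]{Deb-qik}, and asserts that those functions already satisfy the hypotheses of Lemma \ref{qwi: taublem} and Lemma \ref{qwilem: taublem}. The weight $\exp(|y|^{\alpha})$ is already present in \cite{Deb-qik} (cf.\ \ref{qwi taco5}), and hypothesis (a) of Lemma \ref{qwi: taublem} is automatic from $k\geq 1$. So nothing is modified there. The modification you introduce in its place contains a step that fails as stated.

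\textbf{The gap: multiplication by $y$ is not a constant-factor cost.} Squaring and a fixed shift do cost only constants: convolution with $\hat\psi_n$, a rescaling to compensate for the doubled support, and $\kappa n$ replaced by $\kappa n+a$.
\begin{itemize}
\item Since $\widehat{yf}=i\,(\hat f)'$, the $j$-th derivative of the Fourier transform of $y\,\psi_n(y-a)^2$ is, up to the factor $i$, the $(j+1)$-st derivative of $e^{-iat}\widehat{\psi_n^{2}}(t)$.
\item Derivative bounds of size $(\kappa n)^{j}$ for the family therefore give only $(\kappa n)^{j+1}$, an extra factor of order $n$.
\item In the $n$-fold integration by parts used for $\mathcal{T}_{\mathrm{SA}}$, $\mathcal{T}_{\mathrm{SAI}}$ and $\mathcal{T}_{\mathrm{An}}$, this factor multiplies every term in which at least one derivative falls on $\hat\phi$.
\item At the optimal $n$, of size about $x/(BG(\lambda)+\kappa\lambda^{-1})$, it becomes a factor growing with $x$. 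This factor can be absorbed neither into $C_{17}$ nor into $\kappa$ uniformly in $\lambda\geq 1$.
\end{itemize}
For the classes with a fixed $N$ (Dif, DifI, HC, HCI) your construction is fine, since there a single fixed $\phi$ suffices.

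\textbf{The repair.} Let the factor $y$ and the shift act only on an $n$-independent factor of the test function. Then all derivatives can still be placed on the $n$-dependent factor that carries the $(\kappa n)^j$ bounds. Alternatively, simply use the functions of \cite{Deb-qik} unchanged, as the paper does.

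\textbf{Uniformity.} The lemma requires the following to hold uniformly over the whole family $\phi_{x,\lambda}$, i.e.\ uniformly in $n$:
\begin{itemize}
\item the bounds $C_{10}$ and $C_{11}$;
\item the normalisation $\int\phi=1$;
\item a positive lower bound for $C_9$.
\end{itemize}
You only check finiteness for each individual $\phi$. That does not suffice once $\phi$ depends on $n$; you need, for instance, a decay bound $|\phi_{x,\lambda}(y)|\ll\exp(-|y|^{\gamma})$ with a constant independent of $x$ and $\lambda$.
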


The result \eqref{qwi: resulttransformcond} with $E_{\mathrm{AnA}}$ being defined as \eqref{qwi: eqmodifiedean}, can be shown with the arguments in the proof of \cite[Th. 1.3]{Deb-qik}. 

We conclude this section with some final comments on the Tauberian condition.

\begin{remark} \label{qwi: remtaubcond} In the literature of unquantified Tauberian theorems, the Tauberian condition of \emph{slow decrease} or \emph{slow oscillation} is a standard generalization for a non-decreasing function; it has the benefit that certain singularities of the Laplace transform can be subtracted without violating the Tauberian condition and that the functions may temporarily possess some mild decrease.

In a quantified setting, considering such a fixed Tauberian condition is less advisable because it may lay some undesirable restrictions on the obtained rate in the Tauberian theorem; in the Ingham-Karamata theorem for example it would be impossible to get any quantified rate under the Tauberian condition of slow decrease as any $o(1)$-function is slowly decreasing. An additional undesirable feature would be that some reasonable singularities could not be handled, but this issue also occurs in the unquantified setting and this was the main reason for the introduction of flexible Tauberian conditions in \cite{Deb-qik}. 

Nevertheless, although we believe it is the best practice to encode the significant singularities of the Laplace transform separately in a flexible function $F$ rather than directly in the function $\tau$ from Theorem \ref{qwi: thold}, we acknowledge it may sometimes still be useful to allow some controlled decrease of the function $\tau$ from Theorem \ref{qwi: thold}.

Therefore, we observe that one may replace \ref{qwi taco1} and \ref{qwi taco1bis} in the Tauberian condition $\eT$ with
\begin{equation} \label{qwi: quananslowdec}
\begin{aligned}
h(y)(S(y) + F(y)) & \leq h(x)(S(x) + F(x)) + h(x) E(x), & y \leq x,\ \  x \geq X, \\
h(y)(S(y) + F(y)) & \geq h(x)(S(x) + F(x)) - h(y) E(x), & y \geq x \geq X,
\end{aligned}
\end{equation}
where $E: [X,\infty) \rightarrow (0,\infty)$ is a positive function. This can be considered as some quantified analogue of slow decrease. Naturally the function $E$ affects the result in Theorem \ref{qwi: thmain} but the changes in the proofs are straight-forward and the theorem remains true as long as $E(x) \ll $ the right-hand side of respectively \eqref{eqqwi: mainresultaux} and \eqref{eqqwi: mainresult}. Inserting the necessary modifications one needs the extra term $2C_7C_9^{-1} E(x)$ in the right-hand side of \eqref{eqqwi: boundedness} in Lemma \ref{qwi: taublem} and the extra term $E(x) \int^{\infty}_{-\infty} |\phi|$, with appropriate signs, on the left-most and right-most expression of \eqref{eqqwi: restaublem} in Lemma \ref{qwilem: taublem}. (The functions $\phi_{x,\lambda}$ in Lemma \ref{qwi: lemtransformcond} can be constructed in such a way that $\int^{\infty}_{-\infty} |\phi_{x,\lambda}| \leq C_{18}$, with $C_{18}$ a constant depending only on $\alpha$.)

Often it even suffices to only have \eqref{qwi: quananslowdec} for $y$ in a neighborhood of $x$ (of fixed length). Namely, when, in the proofs of the Tauberian lemmas, the inequality \eqref{qwi: quananslowdec} is applied inside the integral, the mass of the integral $\int^{\infty}_{-\infty} \phi( u) \dif u$ is concentrated near $u = 0$ and thus there should be not that much wiggle room to relax the \eqref{qwi: quananslowdec} if $y$ is near $x$. However, when $u$ is relatively large, one could leverage the decay of $\phi$ near $\infty$ to ensure that worse estimates than \eqref{qwi: quananslowdec} when $y$ is far away from $x$, obtained either through iteration or simply available, remain acceptable after one inserts these inside the integral. With this technique, one can for instance show the unquantified Wiener-Ikehara theorem under the \emph{log-linear slowly decreasing} Tauberian condition \cite[Th. 1]{Debruyne-VindasGeneralWI}, see also \cite{zhang-exact}, which strictly speaking is not captured by \eqref{qwi: quananslowdec}. (The proof in \cite{Debruyne-VindasGeneralWI} differs from the approach proposed here.)

\end{remark}

\section{Corollaries} \label{qwisec: corol}

In this final section, we provide some corollaries involving pole-like (and other) singularities on the Laplace transform  of our main theorem. These versions are somewhat easier to use in applications.

We say that a function $\tau: [0,\infty) \rightarrow \mathbb{R}$ is \emph{eventually non-decreasing} if there exists $X > 0$ such that $\tau$ is non-decreasing on $[X,\infty)$.

\begin{theorem} \label{qwi th:cor1} Let $\tau: [0,\infty) \rightarrow \mathbb{R}$  be a locally integrable eventually non-decreasing function, $\beta > 0$ and $F: \R \rightarrow \R$ be a non-negative locally integrable function such that 
\begin{equation} \label{eqqwi: hypFcor}
 |F(x+y) - F(x)| \leq C F(x) |y| \exp(|y|^{\alpha}), \ \ \  \text{for all } y \in \R, \: x \geq X, 
\end{equation} 
for some $C, X > 0$, $0 \leq \alpha < 1$ and $F \not \equiv 0$.

Then, if 
\begin{equation}
 \mathcal{L}\{\tau, s + \beta\} - \mathcal{L}\{F;s\} = \int^{\infty}_{0} \tau(x) e^{-(s+\beta)x} \dif x - \int^{\infty}_{0} F(x) e^{-sx} \dif x,
\end{equation}
converges for $\Re s > 0$ and admits a continuous\footnote{It is also allowed that the Laplace transform admits (ultra)distributional boundary values instead.} extension to $\Re s = 0$ with $g(t) := \mathcal{L}\{\tau; \beta + it\} -  \mathcal{L}\{F; it\}  \in \mathcal{T}_\ast$, then 
\begin{equation} 
 \tau(x) \ll e^{\beta x} \Big(F(x) + E_{\ast}(x,1)\Big), \ \ \ x \rightarrow \infty.
\end{equation}

Furthermore, if, additionally, $E_{\ast}(x,1) \ll F(x)$, then
\begin{equation} \label{qwi: thusefulmain}
 \tau(x) = e^{\beta x} F(x) + O\left(e^{\beta x} \inf_{\lambda \geq 1} \left\{E_{\ast}(x,\lambda) + \frac{F(x)}{\lambda}\right\}  \right), \ \ \ x \rightarrow \infty.
\end{equation}
\end{theorem}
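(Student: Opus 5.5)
We reduce Theorem \ref{qwi th:cor1} to Theorem \ref{qwi: thmain}. Set
\[
S(x) := e^{-\beta x}\tau(x) - F(x) \quad (x \geq 0), \qquad S(x) := 0 \quad (x<0),
\]
and take $h(x) = e^{\beta x}$, $h_{\mathrm{D}}(x) = \beta e^{\beta x}$, $F_{\mathrm{D}} = F$, $k(u) = e^{\beta u}$. Then $h(x)(S(x)+F(x)) = \tau(x)$ for $x\ge 0$, so \ref{qwi taco1} holds for $x \ge X$ because $\tau$ is eventually non-decreasing. Moreover $\mathcal{L}\{S;s\} = \mathcal{L}\{\tau;s+\beta\} - \mathcal{L}\{F;s\}$, so the boundary value $g$ is the assumed one in $\mathcal{T}_\ast$. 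Once we are in Theorem \ref{qwi: thmain}, the conclusions follow at once. Indeed, $|S(x)| \ll E_\ast(x,1)+F(x)$ gives $\tau(x) \ll e^{\beta x}(F(x)+E_\ast(x,1))$. Likewise \eqref{eqqwi: mainresult}, with $h_{\mathrm{D}}/h = \beta$ and $F_{\mathrm{D}} = F$, gives \eqref{qwi: thusefulmain}.

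It remains to verify the class $\eT$ after possibly modifying $F$ and enlarging $X$.

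\begin{enumerate}[label=(\roman*)]
\item \textbf{Hypotheses \ref{qwi taco3}, \ref{qwi taco5}, \ref{qwi auxhypo9}, \ref{qwi taco9}.} Condition \ref{qwi taco3} is automatic for exponential $h$ with $C_1=1$. Condition \ref{qwi taco5} is exactly \eqref{eqqwi: hypFcor} with $F_{\mathrm{D}} = F$. Condition \ref{qwi auxhypo9} holds with $C_8=1$, and \ref{qwi taco9} holds with $C_7=1$.
\item \textbf{Hypothesis \ref{qwi taco4}.} This follows from \ref{qwi taco5} and \ref{qwi auxhypo9}, as remarked after the definition of $\eT$.
\item \textbf{Positivity of $F$.} The class requires $F$ to be positive, whereas here $F$ is only non-negative. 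Iterating \eqref{eqqwi: hypFcor} shows that if $F(x_0)>0$ for some $x_0\ge X$, then $F(x_0+y)>0$ for small $|y|$. A chaining argument then shows $F>0$ on $[X,\infty)$ and $F(x+y)\ge F(x)e^{-c|y|}$-type lower bounds locally. It may be simpler to replace $F$ by $\max\{F, \epsilon e^{-x}\}$-like positive minorants, which do not affect the asymptotics. I expect this to need some care.
\item \textbf{Hypothesis \ref{qwi taco8}.} We need $e^{\beta x}F(x) \ge C_6 > 0$ for $x \geq X$. From (iii) with $y=-1$ one gets $F(x-1) \ge (1-Ce)F(x)$, which is only useful if $Ce<1$. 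Better is the upper direction $F(x+1)\le (1+Ce)F(x)$, which iterates to $F(x+n) \le (1+Ce)^n F(x)$. Neither direction directly yields $e^{\beta x}F(x)\gg 1$. The hypothesis is used only to bound $S$ from below in Lemma \ref{qwi: taublem}. So I would instead enlarge $F$ to $F+e^{-\beta x/2}$, whose Laplace transform is analytic across $\Re s=0$ and preserves \eqref{eqqwi: hypFcor}. Alternatively, I would argue directly that $\tau(x)\ge \tau(X)$ gives $S(x)\ge -F(x)-|\tau(X)|e^{-\beta x}$. This error is absorbed into the final estimates as $O(e^{-\beta x})$, which is negligible relative to $F(x)/\lambda$ for the relevant $\lambda$.
\item \textbf{Hypotheses \ref{qwi taco1bis}, \ref{qwi taco6}, \ref{qwi taco7}.} These are constraints on $[0,X]$. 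They are arranged by redefining $\tau$ on $[0,X]$: replacing $\tau$ there by a constant $\le \tau(X)$ changes $\mathcal{L}\{\tau;s+\beta\}$ by an entire function. This adds to $g$ a smooth term; since it is the Laplace transform of a compactly supported bounded function, it lies in each class $\mathcal{T}_\ast$ with harmless constants.
\end{enumerate}

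The main obstacle is therefore the bookkeeping in (iii)–(v): ensuring positivity and the lower bound \ref{qwi taco8} for $F$, and the behaviour on $[0,X]$, without destroying membership of $g$ in $\mathcal{T}_\ast$ or the error terms. The substantive Tauberian work is already done in Theorem \ref{qwi: thmain}.
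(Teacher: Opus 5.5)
You set up the same reduction as the paper ($S=e^{-\beta x}\tau-F$, $h(x)=e^{\beta x}$, $h_{\mathrm{D}}=\beta h$, $F_{\mathrm{D}}=F$, $k(u)=e^{\beta u}$), and (i)--(ii) are fine. However, (iii)--(v) contain a genuine gap. You miss the fact on which the whole verification rests: \eqref{eqqwi: hypFcor}, being assumed for \emph{all} $y\in\mathbb{R}$, forces a subexponential lower bound on $F$. First, if $F(x_0)=0$ for some $x_0\ge X$, then \eqref{eqqwi: hypFcor} at the base point $x_0$ gives $F\equiv 0$, so $F>0$ on $[X,\infty)$ without any chaining. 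More importantly, do not iterate unit steps, which only produces exponential bounds, as you noticed. Instead apply \eqref{eqqwi: hypFcor} at the base point $x\ge X$ with the single large shift $y=X-x$. This gives $F(X)\le F(x)\bigl(1+C(x-X)e^{(x-X)^{\alpha}}\bigr)$, hence $F(x)\gg \exp(-x^{\gamma})$ on $[X,\infty)$ for every $\gamma\in(\alpha,1)$. Then $e^{\beta x}F(x)\to\infty$ and \ref{qwi taco8} is immediate, contrary to your assertion that it cannot be obtained directly.

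Without this bound your patches do not work.
\begin{itemize}
\item Replacing $F$ by $F+e^{-\beta x/2}$ subtracts $1/(s+\beta/2)$ from $\mathcal{L}\{S;s\}$, i.e.\ adds a pole to $g$ at $t=i\beta/2$. This pole lies inside the region of $\mathcal{T}_{\mathrm{An}}(G,H)$ whenever $G(0)\le 2/\beta$. Moreover, verifying \eqref{eqqwi: hypFcor} for the new function at shifts $y\approx -x$ again needs a lower bound on $F$.
\item Your ``direct'' alternative does not give $O(e^{-\beta x})$ either. Inserting $S\ge -F-|\tau(X)|e^{-\beta x}$ into Lemma \ref{qwi: taublem} produces the term $|\tau(X)|\int_{-x}^{\infty}e^{-\beta(x+u)}\phi(u)\,\mathrm{d}u$. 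Since $\phi$ only decays subexponentially, this is in general only subexponentially small.
\item In (v), the claim that $\hat W$ (for the compactly supported correction $W$ on $[0,X]$) lies in every class $\mathcal{T}_\ast$ with harmless constants is false. $W$ need not even be bounded, as $\tau$ is only locally integrable, so $\hat W$ decays only by Riemann--Lebesgue. This can violate e.g.\ $\mathcal{T}_{\mathrm{Dif}}$ with $p<\infty$, or $\mathcal{T}_{\mathrm{An}}$ with $H(t)\asymp\log(t+2)/t$ as in the prime number theorem example.
\item Also in (v), a nonzero constant on $[0,X)$ breaks \ref{qwi taco6} wherever $F$ vanishes there.
\end{itemize}

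The paper proceeds instead as follows.
\begin{itemize}
\item It sets $F=0$ on $(-\infty,0)$, enlarging $X$ so that $X\ge 1/C$.
\item It makes $\tau(X)\ge 0$ by adding a constant to $\tau$ and the matching multiple of $e^{-\beta x}$ to $F$. This is harmless since $e^{-\beta x}\ll F$, which keeps $\tilde F\asymp F$ and \eqref{eqqwi: hypFcor}.
\item It puts $\tau=0$ on $[0,X)$, which gives \ref{qwi taco1bis} and \ref{qwi taco6}.
\item It does \emph{not} try to put $\hat W$ into $\mathcal{T}_\ast$. Instead it bounds the extra term $\lambda\int W(x+y)\phi(\lambda y)\,\mathrm{d}y$ in Lemma \ref{qwi: lemtransformcond} directly, using the compact support of $W$ and $\phi(y)\ll\exp(-|y|^{\gamma'})$ with $\gamma<\gamma'<1$. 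This makes the term $\ll \exp(-x^{\gamma})/\lambda$ uniformly in $\lambda\ge 1$, so it is absorbed into $F(x)/\lambda$, once more thanks to $F(x)\gg\exp(-x^{\gamma})$.
\end{itemize}
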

The $O$-constants here are allowed to depend on the function $\tau$.

\begin{proof} The proof is essentially just applying Theorem \ref{qwi: thmain} to $S(x) := e^{-\beta x} \tau(x) - F(x)$ and noting that the critical property of the Tauberian condition \ref{qwi taco1} with $h(x) = e^{\beta x}$ follows from the fact that $\tau$ is eventually non-decreasing. Yet, since we wish to provide an easy-to-apply theorem without unnecessary extra assumptions on $\tau$ or $F$ that are annoying to verify, we still need to do some reductions and a little technical work to check that the other properties of $\eT$ are not troublesome.

First of all, we observe that $F \not \equiv 0$ and \eqref{eqqwi: hypFcor} imply that $F(x) \gg \exp(-x^{\gamma})$ for sufficiently large $x$, say $x \geq X$, and for some $\gamma < 1$. We may assume without loss of generality that $F \equiv 0$ on $(-\infty,0)$ as \eqref{eqqwi: hypFcor} clearly holds for $F(x) 1_{[0,\infty)}(x)$; replace if necessary $X$ to ensure $X \geq 1/C$. The assumption $F(x) 1_{[0,\infty)}(x) \not \equiv 0$ follows from $F(x) \gg \exp(-x^{\gamma})$ for $x \geq X$.

Next, say that $\tau$ is non-decreasing on $[X,\infty)$. We argue that we may assume without loss of generality that $\tau$ is non-negative on $[X,\infty)$. Indeed, one may, if necessary, consider $\tilde{\tau}(x)= \tau(X) + \tau(x)$ and $\tilde{F}(x) = F(x) + 1_{[0,\infty)}(x) e^{-\beta x} \tau(X)$. Since $F(x) \gg \exp(-x^{\gamma})$ for $x \geq X$, we have that $\tilde{F} \asymp F$,  $\operatorname*{supp} \tilde{F} \subseteq [0,\infty)$ and that $\tilde{F}$ also satisfies \eqref{eqqwi: hypFcor}, possibly with a different $C$ and $\alpha$. Finally, we redefine $\tau$ on $[0,X)$ where we set it $\tau(x) =0$; obviously this does not affect the asymptotic of $\tau$ as $x \rightarrow \infty$, but it has some consequences for the Laplace transform of $\tau$, which we take into account later in the proof.

We set $S(x) := e^{-\beta x} \tau(x) - F(x)$. We have to verify that $S$ satisfies the Tauberian condition $\eT$ after selecting $h(x) = \exp(\beta x)$, $h_{\mathrm{D}}(x) = \beta \exp(\beta x)$, $F_\mathrm{D}(x) = F(x)$, $k(u) \equiv e^{\beta u}$ and suitably chosen constants $C_j$, $1 \leq j \leq 8$. It is clear that $S$ vanishes on the negative half-axis. The non-decrease of $\tau$ on $[X,\infty)$ ensures the critical property \ref{qwi taco1}, while the non-negativity of $\tau(X)$ and the fact that we redefined $\tau$ on $[0,X)$ as the zero function provide \ref{qwi taco1bis}. Property \ref{qwi taco5} follows from \eqref{eqqwi: hypFcor} and \ref{qwi auxhypo9} is obvious. As discussed in Section \ref{secqwi: taubcond}, these two properties then imply \ref{qwi taco3} and \ref{qwi taco4} with $h(x) = \exp(\beta x)$ and $h_{\mathrm{D}}(x) = \beta \exp(\beta x)$. Property \ref{qwi taco6} follows from the definition of $S$ and the fact that we defined $\tau$ to be $0$ on $[0,X)$. One can deduce \ref{qwi taco8} from $F(x) \gg \exp(-x^{\gamma})$ and the remaining properties are trivially verified for suitably chosen constant $C_j$.

Now one has that $\mathcal{L}\{S;s\} = \mathcal{L}\{\tau; s+\beta\} - \mathcal{L}\{F;s\}$, but this Laplace transform differs from that in the statement of the theorem by $\mathcal{L}\{W;s\}$, where $W$ is a compactly supported locally integrable function, because we have redefined $\tau$ on a compact interval. This means that $\mathcal{L}\{S;it\}$ may not necessarily belong to $\mathcal{T}_\ast$ and this extra function $W$ necessitates a little adjustment in the proof of Theorem \ref{qwi: thmain}. Namely, in Lemma \ref{qwi: lemtransformcond}, we will now have the extra term $\lambda |  \int^{\infty}_{-\infty} W(x + y) \phi(\lambda y)  \dif y|$ in \eqref{qwi: resulttransformcond} which needs to be estimated.  We use the compact support of $W$ and $\phi(y) \ll \exp(-|y|^{\gamma'})$ for some fixed $ \gamma < \gamma' < 1$ to see that $\int^{\infty}_{-\infty} \lambda |W(x+y) \phi(\lambda y)| \dif y \ll \exp(-\lambda x^{\gamma})$ as $x \rightarrow \infty$ for any $\lambda \geq 1$. Therefore, by exploiting that the original Laplace transform $\mathcal{L}\{\tau; \beta + it \} - \mathcal{L}\{F;it\} \in \mathcal{T}_\ast$, Lemma \ref{qwi: lemtransformcond} now gives
$$ \lambda \bigg|  \int^{\infty}_{-\infty} S(x + y) \phi(\lambda y)  \dif y\bigg| \ll E_{\ast}(x,\lambda) + \exp(-\lambda x^{\gamma}), \ \ \ x \geq X.
$$ 
When one inserts this estimate now in Lemmas \ref{qwi: taublem} and \ref{qwilem: taublem}, the extra term $\exp(-\lambda x^{\gamma})$ becomes harmless as it can be absorbed in $F(x)/\lambda$ because $F(x) \gg \exp(-x^{\gamma})$ for $x \geq X$.

Therefore, we are justified to apply Theorem \ref{qwi: thmain} to $S$ and we find \eqref{qwi: thusefulmain} by rewriting $\tau$ in terms of $S$. This concludes the proof.
\end{proof}

We now provide the corresponding corollary for some concrete functions $F$ arising frequently in applications. Let $\rho \in \mathbb{C}$ and $\theta > 0$. We consider
\begin{equation} \label{qwi: eqdeffa} F_{\rho,\theta}(x) = x^{\rho} - G_{\rho,\theta}(x), \ \ \ G_{\rho,\theta}(x) = \begin{cases} x^{\rho}e^{-\theta x} \displaystyle{\sum_{j = 0}^{\lceil -\Re \rho \rceil -1}} \frac{\theta^{j}}{j!} x^j  & \text{if } \Re \rho < 0,\\
0 & \text{otherwise}.
\end{cases}
\end{equation}
The function $G_{\rho,\theta}$ is mainly an auxiliary function, decaying very fast at $\infty$ as to not interfere significantly with the asymptotic $x^{\rho}$ near $\infty$. This auxiliary function $G_{\rho,\theta}$ ensures that $F_{\rho,\theta}$ remains bounded near $x = 0$, while the Laplace transform remains explicitly computable. It is
\begin{align*} \mathcal{L}\{F_{\rho, \theta} ; s\} & = \frac{\Gamma(\rho + 1)}{s^{\rho  + 1}} - \mathcal{G}_{\rho, \theta}(s), \ \ \ \text{if } \rho \notin \{-1, -2, -3, \dots \}, \\
\mathcal{G}_{\rho, \theta}(s) & = \displaystyle{\sum_{j = 0}^{\lceil -\Re \rho \rceil -1}} \frac{\theta^{j}}{j!} \frac{\Gamma(\rho + 1 + j)}{ (s+ \theta)^{\rho + j + 1} }, \ \ \ \text{if } \Re \rho < 0 \text{ and } \rho \notin \{-1, -2, -3, \dots \},\\
\mathcal{L}\{F_{-N, \theta} ; s\} & = \frac{s^{N-1} (-1)^{N-1}}{(N-1)!} \log\Big(\frac{s+ \theta}{s}\Big) - \mathcal{G}_{-N, \theta}(s), \ \ \ \text{if } N \in \mathbb{N}, \\
\mathcal{G}_{-N, \theta}(s) & = \sum_{j = 1}^{N-1} \frac{\theta^j}{j!} \frac{(-1)^{N-j}(s+ \theta)^{N-j-1}}{(N-j-1)!} \sum_{\ell = N-j}^{N-1} \frac{1}{\ell}, \ \ \ \text{if } N \in \mathbb{N} \text{ and } N \geq 2.
\end{align*}
In the undefined cases we set $\mathcal{G}_{ \rho,\theta} \equiv 0$.

We now state our theorem for power-type singularities (and some logarithmic ones) on the Laplace transform. The locations of the singularities are collected in $\mathcal{S}$, while the precise power in each of these locations $s'$ are found within the set $\mathcal{A}_{s'}$.

\begin{theorem} \label{qwi: thspeccor} Let $\tau: [0,\infty) \rightarrow \mathbb{R}$ be a locally integrable eventually non-decreasing function and $\beta, \theta >0$. Let $\mathcal{S}$ be a finite set of complex numbers for which $\overline{s_l} \in \mathcal{S}$ if $s_l \in \mathcal{S}$ and $\max\{\Re s_l :  s_l \in \mathcal{S}\} = 0$. With each $s_l \in \mathcal{S}$, we associate a finite set $\mathcal{A}_{s_l}$ of complex numbers for which $\overline{\rho_{m,l}} \in \mathcal{A}_{s_l}$ if $\rho_{m,l} \in \mathcal{A}_{\overline{s_l}}$. We set $\rho := \max \Re \rho_{m,l'}$ where the maximum runs over all $\rho_{m,l'} \in \mathcal{A}_{s_{l'}}$ where $s_{l'} \in \mathcal{S}$ and $\Re s_{l'} = 0$. Let $c_{s_l,\rho_{m,l}}$ be complex numbers such that $c_{s_l, \rho_{m,l}} = \overline {c_{\overline{s_l}, \overline{\rho_{m,l}}}}$.

Then, if 
\begin{equation}
 \mathcal{L}\{\tau, s + \beta\} - \sum_{\substack{s_l \in \mathcal{S} \\ \rho_{m,l} \in \mathcal{A}_{s_l}}} c_{s_l, \rho_{m,l}} \mathcal{L}\{F_{\rho_{m,l}, \theta};s - s_l\} 
\end{equation}
converges for $\Re s > 0$ and admits a continuous extension to $\Re s = 0$ with $g(t) := \mathcal{L}\{\tau; \beta + it\} -  \sum c_{s_l,\rho_{m,l}}\mathcal{L}\{F_{\rho_{m,l},\theta}; it - s_l\}  \in \mathcal{T}_\ast$, then 
\begin{equation}  \label{qwi: eqspeccorsimp}
 \tau(x) \ll e^{\beta x} \Big(x^{\rho} + E_{\ast}(x,1)\Big), \ \ \ x \rightarrow \infty.
\end{equation}

Furthermore, if, additionally, $E_{\ast}(x,1) \ll x^{\rho}$, then as $x \rightarrow \infty$,
\begin{equation}
 \tau(x) = e^{\beta x} \sum_{\substack{s_l \in \mathcal{S} \\ \rho_{m,l} \in \mathcal{A}_{s_l}}} c_{s_l,\rho_{m,l}} e^{s_l x} x^{\rho_{m,l}} + O\bigg(e^{\beta x} \inf_{\lambda \geq 1} \left\{E_{\ast}(x,\lambda) + \frac{x^{\rho}}{\lambda} + e^{-\theta x}\right\}  \bigg).
\end{equation}
\end{theorem}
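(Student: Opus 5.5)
The plan is to deduce Theorem \ref{qwi: thspeccor} from Theorem \ref{qwi th:cor1}, or more precisely from its proof. We cannot simply feed the natural main term
$$\Phi(x) := \sum_{s_l,\rho_{m,l}} c_{s_l,\rho_{m,l}} e^{s_l x} F_{\rho_{m,l},\theta}(x)$$
into Theorem \ref{qwi th:cor1} as the function $F$. That theorem asks for a non-negative function satisfying \eqref{eqqwi: hypFcor}, whereas $\Phi$ is real-valued (by the conjugation symmetry of $\mathcal{S}$, $\mathcal{A}_{s_l}$ and the coefficients) but oscillates, and may be negative.

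We therefore separate the two roles of $F$. We apply Theorem \ref{qwi: thmain} directly to
$$S(x) := e^{-\beta x}\tau(x) - \Phi(x)\chi_{[0,\infty)}(x),$$
with $h(x)=e^{\beta x}$, $h_\mathrm{D} = \beta e^{\beta x}$ and $k(u)=e^{\beta u}$. Non-decrease of $e^{\beta x}(S+\Phi)$ is exactly \ref{qwi taco1} with $\Phi$ in place of $F$. To obtain the Tauberian condition with a \emph{positive, regular} weight, we write
$$S + \Phi = S + \bigl(\Phi - K(1+x)^{\rho}\bigr) + K(1+x)^{\rho},$$
so the condition becomes monotonicity of $e^{\beta x}(S + \Phi_1 + F)$ with $F(x) := K(1+x)^{\rho}$ and $\Phi_1 := \Phi - F$.

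Two routes then suggest themselves. The first is the modification of Remark \ref{qwi: remtaubcond}: absorb $\Phi_1$ into the quantified slow-decrease function $E$. This fails, since $\Phi_1$ is only $O(x^\rho)$, which is too big. The second, which is the one we adopt, is to rerun the proofs of Lemmas \ref{qwi: taublem} and \ref{qwilem: taublem} with the general real function $\Phi$ in the role of $F$. Inspecting those proofs, $F$ enters only through \ref{qwi taco3}--\ref{qwi auxhypo9}, which need the bounds
$$|\Phi(x)|\ll F(x), \qquad |\Phi(x+y)-\Phi(x)| \ll F(x)|y|\exp(|y|^\alpha),$$
together with positivity of $e^{\beta x}(\Phi + C F)$ to obtain the lower bound in Lemma \ref{qwi: taublem}. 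Here $F(x)=(1+x)^\rho$ with $\rho=\max \Re\rho_{m,l}$ over the terms with $\Re s_l=0$.

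We verify these bounds as follows.
\begin{itemize}
\item Terms with $\Re s_l<0$ are $O(e^{-cx})$ and are harmless, as is the correction $G_{\rho,\theta}$, which decays like $e^{-\theta x}$.
\item For terms with $\Re s_l = 0$, $\Re\rho_{m,l}\le\rho$, the derivative of $e^{s_lx}x^{\rho_{m,l}}$ is $O(x^{\rho})$. This yields the Lipschitz-type bound with $F_\mathrm{D}\asymp F$, uniformly for $|y|\le x/2$. For larger $|y|$ we use crude polynomial bounds, which are absorbed by $\exp(|y|^\alpha)$.
\end{itemize}
With these in hand, the positivity-based estimates from the proofs in Section \ref{secqwi: proof} go through verbatim, with $S+\Phi$ replacing $S+F$. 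Adjusting $\tau$ on $[0,X)$ exactly as in the proof of Theorem \ref{qwi th:cor1} (zeroing it out and shifting by $\tau(X)$) then gives \ref{qwi taco1bis} and \ref{qwi taco6}.

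For the Laplace side, the computed formulas show $\mathcal{L}\{S;s\} = \mathcal{L}\{\tau;s+\beta\} - \sum c\,\mathcal{L}\{F_{\rho_{m,l},\theta};s-s_l\}$, up to a compactly supported modification $W$. Its boundary value is $g\in\mathcal{T}_\ast$, plus the $W$-term, which is treated as in the proof of Theorem \ref{qwi th:cor1}. Lemma \ref{qwi: lemtransformcond} applies once we check that $S+\Phi$ is eventually of one sign, which holds because $\tau\ge0$ eventually. Lemma \ref{qwi: taublem} then yields $|S|\ll x^\rho + E_\ast(x,1)$, and hence \eqref{qwi: eqspeccorsimp}, since $|\Phi|\ll x^\rho$. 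If moreover $E_\ast(x,1)\ll x^\rho$, we obtain \eqref{qwi taco2} and, through Lemma \ref{qwilem: taublem}, the bound
$$|S(x)|\ll \inf_{\lambda\ge1}\Bigl\{E_\ast(x,\lambda)+\frac{x^\rho}{\lambda}\Bigr\}.$$
Finally, replacing $F_{\rho,\theta}$ by $x^{\rho}$ in $\Phi$ costs $O(e^{-\theta x})$, giving the stated error term.

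The main obstacle is justifying that Lemmas \ref{qwi: taublem} and \ref{qwilem: taublem} tolerate a sign-changing $\Phi$. The delicate point is the step in Lemma \ref{qwilem: taublem} using \ref{qwi taco4}, where $\Phi(x+y)$ multiplies $h(x+y)-h(x)$. I would handle it by bounding $|\Phi(x+y)|\ll F(x)\exp(|y|^\alpha)$ for $y<0$, using $\rho$-polynomial growth, and checking that the constants only get multiplied.
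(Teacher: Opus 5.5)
Your argument is correct, but it follows a different and more laborious route than the paper. The paper never reopens Lemmas \ref{qwi: taublem} and \ref{qwilem: taublem}. It applies Theorem \ref{qwi th:cor1} as a black box to $\tilde\tau(x)=\tau(x)+Me^{\beta x}F_{\rho,\theta}(x)$ and $\tilde F = MF_{\rho,\theta}+\Phi$ (set to $0$ on $(-\infty,\theta)$), with $M$ large. Three observations make this work:
\begin{enumerate}
\item since $\beta>0$, $e^{\beta x}F_{\rho,\theta}(x)$ is eventually increasing, so $\tilde\tau$ is still eventually non-decreasing;
\item since $\rho$ is real, $MF_{\rho,\theta}$ dominates the oscillating part of $\Phi$, so $\tilde F\ge 0$, $\tilde F\asymp x^{\rho}$ and \eqref{eqqwi: hypFcor} holds;
\item the two added terms cancel on the Laplace side, up to the compactly supported cutoff.
\end{enumerate}
So the sign obstruction you correctly identified is removed simply by adding a positive multiple of the weight to both $\tau$ and the main term. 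That device is what your proposal overlooks.

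Your route instead amounts to proving a version of Theorem \ref{qwi: thmain} in which the main-term function may change sign. It suffices that this function and its increments are dominated by a positive weight $F$ obeying \ref{qwi taco3}--\ref{qwi auxhypo9}, and your checks of this are sound:
\begin{enumerate}
\item positivity of $S+\Phi=e^{-\beta x}\tau$ on $[X,\infty)$ replaces the positivity of $S+F$ used in Lemma \ref{qwi: taublem};
\item $|\Phi|\le KF$ gives both the extension of the integral to the whole line and the lower bound.
\end{enumerate}
The step you flag as delicate is actually routine. In Lemma \ref{qwilem: taublem} the factor in front of $h(x+y)/h(x)-1$ is $S(x+y)+\Phi(x+y)$, whose absolute value is at most $(C_{15}+K)F(x+y)$. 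Hence \ref{qwi taco4} for the auxiliary $F$ applies directly; you need some $\alpha\in(0,1)$ when $\rho$ is very negative, which is harmless.

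What your approach buys is a reusable principle for sign-changing main terms that does not lean on the extra monotonicity supplied by $\beta>0$. What the paper's approach buys is a proof of a few lines with no re-verification of the Tauberian lemmas.
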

Again we note that the $O$-constants here are allowed to depend on the function $\tau$.
\begin{proof} Let $M$ be a sufficiently large real number. We apply Theorem \ref{qwi th:cor1} to $\tilde{\tau}(x) := \tau(x) + M e^{\beta x} F_{\rho,\theta}(x)$ and $\tilde{F}$ which is defined $\tilde{F}(x) := M F_{\rho,\theta}(x) + \sum_{s_l \in \mathcal{S}, \rho_{m,l} \in \mathcal{A}_{s_l}} c_{s_l, \rho_{m,l}} e^{s_l x}F_{\rho_{m,l},\theta}(x)$ for $x \geq \theta$ and $0$ otherwise. We observe that $\tilde{F}$ is real-valued because of the conditions on $c_{s_{l}, \rho_{m,l}}$. Furthermore, since $\rho$ is real, we can ensure by taking $M$ large enough that \eqref{eqqwi: hypFcor} holds for $\tilde{F}$ (for certain $C$ and $\alpha$). Since $\beta > 0$, it is clear that $\tilde{\tau}$ remains eventually non-decreasing.  

We can therefore apply Theorem \ref{qwi th:cor1}. The results now follow from subtracting $M e^{\beta x} F_{\rho,\theta}(x)$ from $\tilde{\tau}$ and noting that $G_{\rho_{m,l}, \theta}(x) \ll \exp(-\theta x)$ and $\Re s_l \leq 0$ for $s_l \in \mathcal{S}$.
\end{proof}

\begin{remark} Thus, Theorem \ref{qwi: thspeccor} provides a quantified Wiener-Ikehara theorem if the Laplace transform exhibits singularities of the form $s^{-\rho}$ and $s^{N} \log s$ (after a translation), where $\rho \in \mathbb{C}$ is not a non-negative integer and with $N$ a natural number or $0$. Note that, if $\rho$ were to be a non-negative integer, then $s^{-\rho}$ would not be singular at $s = 0$. 

The purpose of including the function $G_{\rho,\theta}$ in the definition of $F_{\rho, \theta}$ in \eqref{qwi: eqdeffa} in case $\Re \rho < 0$ is two-fold. First and foremost, it eliminates the singularity of $x^{\rho}$ at $x= 0$ without changing too much the behavior as $x \rightarrow \infty$. This ensures that the Laplace transform of $F_{\rho,\theta}$ can be explicitly computed, while the lack of a singularity at $x = 0$ is also helpful in the verification of \eqref{eqqwi: hypFcor}. Secondly, due to the inclusion of the function $G_{\rho, \theta}$, one has $\mathcal{L}\{F_{\rho, \theta};s\} \ll 1/s$ as $|s| \rightarrow \infty$, say in the half-plane $\Re s \geq - \theta/2$. This guarantees that potential good bounds\footnote{Consider for example the hypothesis $\mathcal{T}_{An}(G,H)$ where $G \equiv 1$ and $H(t) = \max\{1, t^{-1}\}$ in Theorem \ref{qwi: thspeccor} (and pick $\theta \geq 2$).} for the Laplace transform of $\tau$ in Theorem \ref{qwi: thspeccor} do not get compromised after subtracting the Laplace transform of $F_{\rho,\theta}$.
\end{remark}

In some applications, it is more natural to work with the Laplace-Stieltjes transform than the Laplace transform. Obviously the two transforms are connected through the formula $\int^{\infty}_{0} e^{-sx} \dif \tau(x) = s\int^{\infty}_0 \tau(x) e^{-sx} \dif x$ (on a suitable half-plane), but this extra factor $s$ may change the nature of the singularities (and thus of the main terms in the asymptotic expansion of $\tau$) if one transfers from the Laplace transform to the Laplace-Stieltjes transform. For convenience of the reader, we provide the corresponding theorem if one instead has a finite number of singularities of the form $s^{\rho}$ or $s^{N} \log s$ (after a translation) on the Laplace-Stieltjes transform.

\begin{theorem} \label{qwi: thcorlstiel} Let $\tau: [0,\infty) \rightarrow \mathbb{R}$ be a locally integrable eventually non-decreasing function and $\beta, \theta >0$. Let $\mathcal{S}$, $\mathcal{A}_{s_l}$, $\rho$ and $c_{s_l, \rho_{m,l}}$ be as in the statement of Theorem \ref{qwi: thspeccor}.

Then, if 
\begin{equation}
 \mathcal{L}\{\tau, s + \beta\} - \sum_{\substack{s_l \in \mathcal{S} \\ \rho_{m,l} \in \mathcal{A}_{s_l}}} c_{s_l, \rho_{m,l}} \frac{\mathcal{L}\{F_{\rho_{m,l}, \theta};s - s_l\} }{s + \beta}
\end{equation}
converges for $\Re s > 0$ and admits a continuous extension to $\Re s = 0$ with $g(t) := \mathcal{L}\{\tau; \beta + it\} -  \sum c_{s_l,\rho_{m,l}}\mathcal{L}\{F_{\rho_{m,l},\theta}; it - s_l\}/(\beta + it)  \in \mathcal{T}_\ast$, then \eqref{qwi: eqspeccorsimp} holds. Furthermore, if, additionally, $E_{\ast}(x,1) \ll x^{\rho}$, then as $x \rightarrow \infty$,
\begin{equation} 
 \tau(x) = \sum_{\substack{s_l \in \mathcal{S} \\ \rho_{m,l} \in \mathcal{A}_{s_l}}} c_{s_l,\rho_{m,l}} H_{s_l, \rho_{m,l}, \beta, \theta}(x) + O\bigg(e^{\beta x} \inf_{\lambda \geq 1} \left\{E_{\ast}(x,\lambda) + \frac{x^{\rho}}{\lambda}\right\}  \bigg),
\end{equation}
where
\begin{equation}
 H_{s_l, \rho, \beta, \theta}(x) = \int^{x}_{0} F_{\rho,\theta}(u) e^{(s_l + \beta) u } \dif u.
\end{equation}
\end{theorem}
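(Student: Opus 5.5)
The plan is to reduce Theorem \ref{qwi: thcorlstiel} to Theorem \ref{qwi th:cor1}, in the same spirit as the proof of Theorem \ref{qwi: thspeccor}. The main terms $H_{s_l,\rho_{m,l},\beta,\theta}$ will play the role of $e^{\beta x}F(x)$, and the factor $1/(s+\beta)$ in the hypothesis is exactly the Laplace transform of integration from $0$ to $x$ after the shift $s\mapsto s+\beta$.

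First compute Laplace transforms. For $\Re s>0$,
\[
\mathcal{L}\{H_{s_l,\rho,\beta,\theta}; s+\beta\}=\frac{1}{s+\beta}\,\mathcal{L}\{F_{\rho,\theta}(u)e^{(s_l+\beta)u}; s+\beta\}=\frac{\mathcal{L}\{F_{\rho,\theta}; s-s_l\}}{s+\beta}.
\]
Set $F(x):=e^{-\beta x}\sum c_{s_l,\rho_{m,l}}H_{s_l,\rho_{m,l},\beta,\theta}(x)$. Then $\mathcal{L}\{F;s\}$ is precisely the subtracted sum in the hypothesis, so $\mathcal{L}\{\tau;s+\beta\}-\mathcal{L}\{F;s\}$ has the required continuous extension with boundary function $g\in\mathcal{T}_\ast$.

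Next, understand $F$ asymptotically. For $\Re s_l+\beta>0$, integrate by parts repeatedly (or compare with the incomplete gamma function). This gives
\[
e^{-\beta x}H_{s_l,\rho,\beta,\theta}(x)=\frac{e^{s_l x}x^{\rho}}{s_l+\beta}\bigl(1+O(1/x)\bigr)+O(e^{-\beta x}),
\]
and similarly for the derivative. If some $\Re s_l+\beta\le 0$, the term is $O(x^{\Re\rho+1}+1)e^{-\beta x}$, which is negligible since $\beta>0$. Consequently $F(x)\ll x^{\rho}$, and the same estimates give $F'(x)\ll x^{\rho}$ with the appropriate exponential bookkeeping.

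$F$ itself need not be non-negative, nor satisfy \eqref{eqqwi: hypFcor}, so I will shift as in the proof of Theorem \ref{qwi: thspeccor}. Put $\tilde\tau:=\tau+Me^{\beta x}F_{\rho,\theta}$ and $\tilde F:=MF_{\rho,\theta}+F$, with $M$ large and $\tilde F$ set to zero on a small initial interval if needed. The factor $e^{\beta x}$ keeps $\tilde\tau$ eventually non-decreasing. The Laplace transform of $e^{\beta x}F_{\rho,\theta}$ at $s+\beta$ is exactly $\mathcal{L}\{F_{\rho,\theta};s\}$, which is also subtracted in $\tilde F$, so the boundary function is unchanged and still lies in $\mathcal{T}_\ast$. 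Any modification of $\tilde F$ on a compact interval only alters $g$ by the Laplace transform of a compactly supported function. That perturbation was shown to be harmless in the proof of Theorem \ref{qwi th:cor1}.

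The main obstacle is verifying \eqref{eqqwi: hypFcor} for $\tilde F$. We need $|\tilde F(x+y)-\tilde F(x)|\le C\tilde F(x)|y|\exp(|y|^\alpha)$ for all $y$, including large negative $y$ where $x+y$ may be near $0$.
\begin{itemize}
\item For $|y|\le1$, use the mean value theorem with $\tilde F'\ll x^{\rho}\asymp\tilde F(x)$; this lower bound holds once $M$ exceeds the leading oscillatory coefficients.
\item For $|y|\ge1$, use the crude bound $|\tilde F(x+y)|\ll (1+|x+y|)^{\rho}+1$ and $\tilde F(x)\gg x^{\rho}$. Polynomial growth of $(1+|x|+|y|)^{|\rho|}/x^{\rho}$ in $|y|$ is dominated by $|y|\exp(|y|^\alpha)$ for any fixed $\alpha\in(0,1)$.
\end{itemize}

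With the hypotheses of Theorem \ref{qwi th:cor1} in place, it yields $\tilde\tau(x)=e^{\beta x}\tilde F(x)+O(\cdots)$. Subtracting $Me^{\beta x}F_{\rho,\theta}$ gives the stated expansion with main term $\sum c\,H$ and error $O(e^{\beta x}\inf_\lambda\{E_\ast(x,\lambda)+x^{\rho}/\lambda\})$, using $\tilde F\asymp x^{\rho}$. The rough bound \eqref{qwi: eqspeccorsimp} follows likewise from the first part of Theorem \ref{qwi th:cor1}. No $e^{-\theta x}$ term appears here because $H$ is kept exactly rather than expanded.
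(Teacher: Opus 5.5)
This is essentially the paper's proof. Like the paper, you apply Theorem \ref{qwi th:cor1} to $\tilde\tau=\tau+Me^{\beta x}F_{\rho,\theta}$ and $\tilde F=MF_{\rho,\theta}+e^{-\beta x}\sum c_{s_l,\rho_{m,l}}H_{s_l,\rho_{m,l},\beta,\theta}$, zeroed near the origin and real-valued by the conjugation symmetry of the $c_{s_l,\rho_{m,l}}$. You use $\mathcal{L}\{e^{-\beta x}H_{s_l,\rho,\beta,\theta};s\}=\mathcal{L}\{F_{\rho,\theta};s-s_l\}/(s+\beta)$, and compare the size of $e^{-\beta x}H_{s_l,\rho_{m,l},\beta,\theta}$ with $e^{s_l x}F_{\rho_{m,l},\theta}$ to check \eqref{eqqwi: hypFcor}.

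One step needs repair. When $\rho<0$ (for instance the $\pi(x)$ example, where $\rho=-1$), your $|y|\ge 1$ bullet only gives $|\tilde F(x+y)|/\tilde F(x)\ll x^{|\rho|}$, which is not bounded in terms of $|y|$ uniformly in $x$. Split at $|y|=x/2$ instead: for $|y|\le x/2$ use the mean value theorem with $\tilde F'(\xi)\ll \xi^{\rho}\asymp x^{\rho}$ on $[x/2,3x/2]$, and use the crude bound only for $|y|>x/2$, where $x^{|\rho|}\ll |y|^{|\rho|}$.
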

\begin{proof} The proof is similar to that of Theorem \ref{qwi: thspeccor}, but by replacing, where appropriate, $e^{s_l x} F_{\rho_{m,l}, \theta}(x)$ with $e^{-\beta x} H_{s_l, \rho_{m,l}, \beta, \theta}(x)$ as $\mathcal{L}\{e^{-\beta x} H_{s_l, \rho, \beta, \theta}(x) ;s \} = \mathcal{L}\{F_{\rho, \theta} ; s- s_l\}/ (s + \beta)$. That property \eqref{eqqwi: hypFcor} indeed holds for the adjusted $\tilde{F}$ follows from $e^{s_l x} F_{\rho_{m,l},\theta}(x) \asymp e^{-\beta x} H_{s_l, \rho_{m,l}, \beta, \theta}(x)$. 
\end{proof}

As an illustration for how to apply these theorems in practice, we discuss how some zero-free regions of the Riemann zeta function, together with some bounds inside these regions, lead to quantified versions of the prime number theorem.

\begin{example} We shall discuss three functions whose asymptotics are often referred to as the prime number theorem. We shall assume throughout that $\zeta(s)$, defined initially as $\sum_{n = 1}^{\infty} n^{-s}$ on $\Re s > 1$, admits an analytic extension to $\Re s \geq 1 - c/(\log (2 + |\Im s |))^{\gamma}$, for some $c, \gamma > 0$, with the exception of a simple pole with residue $1$ at $s =1$. For the three functions we shall assume specific bounds tailored to the function of interest. These bounds will be natural in the sense that, at the cost of slightly reducing the region of validity, they can be derived from an appropriate zero-free region via the Borel-Carath\'eodory lemma, see for example \cite[Lemma 6.2]{MV07}, and the initial rough bound $|\zeta(s)| \ll |s|$, say as $|s| \rightarrow \infty$ in $\Re s > 0$. We however do not claim that this procedure is optimal; enlarging the region where adequate bounds for the respective Laplace transforms are valid should lead to better error terms. Our goal here is only to illustrate how to apply the Tauberian theorem by inserting some concrete bounds and regions.

The first function we consider is the usual prime counting function $\pi(x)$. We shall assume here that the bound $|\log \zeta(s) | \ll \log |s|$ holds as $|s| \rightarrow \infty$ in the region $\Re s \geq 1 - c/(\log (2 + |\Im s |))^{\gamma}$, for some $c,\gamma > 0$. We intend to apply Theorem \ref{qwi: thcorlstiel} to $\tau(x) = \pi(e^{x})$. After some standard calculations one finds that
$$ \mathcal{L}\{\pi(e^x);s\} = \frac{\log \zeta(s) + B(s)}{s}, \ \ \ \Re s > 1,
$$
where $B(s)$ is an analytic function that is bounded uniformly on the half-plane $\Re s \geq 1/2 + \varepsilon$, for each $\varepsilon > 0$. We thus wish to apply the Tauberian theorem with $\mathcal{T}_{\mathrm{An}}$ and $G(t) = c^{-1}\log^{\gamma}(t + 2)$ and $H(t) = C\log (t + 2) / t$, for some constant $C$. We only have to subtract the singularity at $s = 1$, which is of type $\log(1/(s-1))/s$, because $\zeta(s)$ has a simple pole at $s = 1$. Therefore, we select $\mathcal{S} = \{0\}$, $\mathcal{A}_{0} = \{-1\}$, $c_{0,-1} = 1$ and $\beta = 1$ and $\theta = 2$, say. Note also that the (the boundary function of) Laplace transform $\mathcal{L}\{\tau; s + 1\}$ remains in $\mathcal{T}_{\mathrm{An}}(G,H)$ after subtracting $\mathcal{L}\{F_{-1,2} ; s\}/(s+1)$ as it is $O(1/s)$ as $|s| \rightarrow \infty$ in the region $\Omega$, while also the term $B(s) / s \ll s^{-1}$ is harmless. Since $\pi(e^{x})$ is non-decreasing, we can apply Theorem \ref{qwi: thcorlstiel} to obtain
\begin{align*} \pi(e^x) & = \int^{x}_0 \frac{e^{u}- e^{-u}}{u} \dif u \\
& \ \ \ + O\bigg(e^{x} \inf_{\lambda \geq 2c/\log^{\gamma}(2)} \bigg\{\exp\bigg(-\frac{cx}{\log^{\gamma}(\lambda + 2) + c\kappa \lambda^{-1}}\bigg) \log^{2}(\lambda + 2) + \frac{1}{x \lambda} \bigg\}  \bigg),
\end{align*}
which after a change of variables and selecting the optimal choice $\log \lambda = (cx)^{1/(\gamma + 1)} - (\gamma + 3)(\gamma + 1)^{-2} \log x + O(1)$ becomes
$$ \pi(x) = \operatorname*{Li}(x) + O\bigg(x \exp\Big(- (c\log x)^{1/(\gamma + 1)} - \frac{\gamma^2 + \gamma - 2}{(\gamma + 1)^2} \log \log x\Big) \bigg), \ \ \ x \rightarrow \infty,
$$
where $\operatorname*{Li}(x) = \int^{x}_2 \log^{-1} u \ \mathrm{d}u$ stands for the logarithmic integral.

Since $H(t)$ is of subpolynomial growth, it is technically advantageous here to work with the error $E_{\mathrm{AnA}}$ from \eqref{qwi: eqmodifiedean} instead of $E_{\mathrm{An}}$. This essentially amounts to replacing the $\log^2 (\lambda + 2)$ factor in the infimum with $\log (\lambda + 2)$ since $n$ can be chosen as $n \asymp x / G(\lambda)$ which delivers $ n \gg \log \lambda$ in view that $\lambda$ will be chosen to obey $\log x + \log \lambda + \log \log \lambda \asymp cx/G(\lambda)$. This gives the following slightly better estimate:
 $$ \pi(x) = \operatorname*{Li}(x) + O\bigg(x \exp\Big(- (c\log x)^{1/(\gamma + 1)} - \frac{\gamma^2 + \gamma - 1}{(\gamma + 1)^2} \log \log x\Big) \bigg), \ \ \ x \rightarrow \infty.
$$

As the second function, we consider $M(x) := \sum_{n \leq x} \mu(n)$, where $\mu(n)$ stands for the M\"obius function, that is the multiplicative function defined by $\mu(p) = -1$ on primes and $\mu(p^{k}) = 0$ on the higher ($k \geq 2$) order prime powers. In this case we assume that $1/|\zeta(s)| \ll (1 +|\Im s|)^{A}$ in the region $\Re s \geq 1 - c/(\log (2 + |\Im s |))^{\gamma}$, for some $c, A, \gamma > 0$. For convenience, we also assume that $\zeta(s)$ satisfies the same bound in the same region, although that is strictly speaking not necessary\footnote{Instead of Theorem \ref{qwi: thcorlstiel}, which requires the explicit behavior $\mathcal{T}_{\mathrm{An}}$ of the Laplace transform, one may use the more precise \emph{exact} behavior of the Laplace transform, see \cite[Remark 4.1]{Deb-qik}. The desired exact behavior for $\zeta(s)/s$ would then immediately follow from the asymptotic $\lfloor e^{x}\rfloor = e^{x} + O(1)$.}. 

The Laplace transform of $M(e^x)$ is $1/(s\zeta(s))$. Yet, we cannot immediately apply Theorem \ref{qwi: thcorlstiel} here because $M(e^{x})$ is not non-decreasing. We therefore consider instead $\tau(x) = M(e^{x}) + \lfloor e^{x}\rfloor = \sum_{n \leq e^{x}} (\mu(n) + 1)$, which is non-decreasing and whose Laplace transform is $(\zeta(s) + \zeta(s)^{-1})/s$. After subtracting the simple pole at $s = 1$ with residue $1$, which is encoded by the Laplace transform of $F_{0,\theta}$, one finds that (the boundary function of) $\mathcal{L}\{\tau; s+ 1\} - \mathcal{L}\{F_{0,\theta};s\}/(s+1)$ belongs to the class $\mathcal{T}_{\mathrm{An}}$ with $G(t) = c^{-1}\log^{\gamma}(t + 2) $ and $H(t) = C t^{A-1}$, for some $C$. Applying
Theorem \ref{qwi: thcorlstiel} then yields,
$$ \tau(x) = e^{x} - 1 + O\bigg(e^{x} \inf_{\lambda \geq 2c/\log^{\gamma} 2} \bigg\{\exp\bigg(-\frac{cx}{\log^{\gamma}(\lambda + 2) + c\kappa \lambda^{-1}}\bigg) (\lambda + 2)^{A} + \frac{1}{\lambda} \bigg\}  \bigg),
$$
which after a change of variables, inserting the asymptotic $\lfloor e^{x}\rfloor = e^{x} + O(1)$ and choosing $\log \lambda = (cx/(A+1))^{1/(\gamma  +1)}$ becomes
$$ M(x) = O\bigg(x \exp\Big(- c^{1/(\gamma + 1)} (A+1)^{-1/(\gamma + 1)} \log^{1/(\gamma + 1)} x\Big) \bigg), \ \ \ x \rightarrow \infty.
$$  
 
The third and final function we consider is the Chebyshev function $\psi(x) = \sum_{p^{k} \leq x} \log p$. This weighted prime counting function is often treated instead of $\pi$ because its Dirichlets series admits a simple pole at $s = 1$, which is somewhat easier to handle than the logarithmic singularity that we had encountered for the first function. Here we assume that $|\zeta'(s)/ \zeta(s)| \ll |s|^{A}$ as $|s| \rightarrow \infty$ in the region $\Re s \geq 1 - c/(\log (2 + |\Im s |))^{\gamma}$, for some $c, A, \gamma > 0$. We consider the non-decreasing function $\tau(x) = \psi(e^x)$ whose Laplace transform is $-\zeta'(s)/ (s\zeta(s))$. It admits a simple pole at $s = 1$ with residue $1$. Thus, a similar calculation as for the second function delivers
$$ \psi(x) = x + O\bigg(x \exp\Big(- c^{1/(\gamma + 1)} (A+1)^{-1/(\gamma + 1)} \log^{1/(\gamma + 1)} x\Big) \bigg), \ \ \ x \rightarrow \infty,
$$
under the mentioned assumptions.
\end{example}

\begin{remark} The connection between the zero-free region and the error in the prime number theorem has been extensively studied in the literature, even for much more general zero-free regions, see for example \cite{broucke2025, pintz1980, revesz2025} for the function $\psi$. The method discussed in the references is somewhat different than the one discussed here and implicitly exploits some additional analytic information about the $\zeta$ function; one uses the analytic continuation of $\zeta$ in a larger half-plane and even some information about the distribution of zeros of $\zeta$ including some zero-density estimates. On the other hand that method does not (explicitly) use estimates for $\zeta(s)$ in the region of analytic continuation. Both approaches are essentially optimal with respect to the assumptions used on the $\zeta$ function. The method from \cite{broucke2025, pintz1980, revesz2025} delivers in many instances, the best possible error term for $\psi$, given this extra analytic information $\zeta$ is taken into account, while our Wiener-Ikehara approach should be optimal if one only has the information that $\zeta(s)$ has a meromorphic extension (with respective bounds on $\zeta'(s)/\zeta(s)$) in the region $\Re s \geq 1 - c/(\log (2 + |\Im s |))^{\gamma}$.   

We however do not claim that a Wiener-Ikehara approach cannot capture the analytic information that is exploited in \cite{broucke2025, pintz1980, revesz2025}. The starting point of the method from \cite{pintz1980} is the Riemann-von Mangoldt formula providing a formula for $\psi$ in terms of the zeros of the $\zeta$ function. Then one can use some results about the distribution of these zeros to estimate the sum originating from these zeros. One might anticipate that an effective quantified Wiener-Ikehara theorem can incorporate the contribution of these zeros into the flexible function $F$ introduced in this paper, while the results on the distribution of these zeros can then be used for estimating this function $F$.
\end{remark}


\begin{thebibliography}{99}

\bibitem{aramaki1988} J.~Aramaki,  \emph{On an extension of the Ikehara Tauberian theorem,} Pacific J. Math. \textbf{133} (1988),  13--30. 

\bibitem{bateman-diamond} P.~T.~Bateman, H.~G.~Diamond, \textit{Asymptotic distribution of Beurling's generalized prime numbers,} in: Studies in Number Theory, pp. 152--210, Math. Assoc. Amer., Prentice-Hall, Englewood Cliffs, N.J., 1969.

\bibitem{broucke2025} F. Broucke, \emph{On the connection between zero-free regions and the error term in the Prime Number Theorem}, Anal. Math. (2026), available at https://doi.org/10.1007/s10476-026-00176-y.

\bibitem{chen-vindas} B. Chen, J. Vindas, \emph{Generalizations of Koga's version of the Wiener-Ikehara theorem}, Ann. Fen. Math. \textbf{51} (2026), 597--611

\bibitem{Deb-qik} G.~Debruyne, \emph{A general quantified Ingham-Karamata Tauberian theorem}, Adv. Math. \textbf{493} (2026), 110920, 66 pp.

\bibitem{Debruyne-VindasComplexTauberians} G.~Debruyne, J.~Vindas, \emph{Complex Tauberian theorems for Laplace transforms with local pseudofunction boundary behavior,} J. Anal. Math. \textbf{138} (2019), 799--833.

\bibitem{Debruyne-VindasGeneralWI} G.~Debruyne, J.~Vindas, \emph{Generalization of the Wiener-Ikehara theorem,} Illinois J. Math. \textbf{60} (2016), 613--624.

\bibitem{d-v} G.~Debruyne, J.~Vindas, \emph{On general prime number theorems with remainder,} in: Generalized Functions and Fourier Analysis, pp. 79-94. Oper. Theory Adv. Appl., Vol. 260, Springer, 2017.

\bibitem{Debruyne-VindasPNTEquivalences} G. Debruyne, J. Vindas, \emph{On PNT equivalences for Beurling numbers,} Monatsh. Math. \textbf{184} (2017), 401--424.

\bibitem{d-v-l1} G.~Debruyne, J.~Vindas, \emph{On Diamond's $L^1$-criterion for asymptotic density of Beurling generalized integers,} Michigan Math. J. \textbf{68} (2019), 211--223.

\bibitem{delange1954}H.~Delange, \emph{G\'{e}n\'{e}ralisation du th\'{e}or\`{e}me de Ikehara,} Ann. Sci. Ecole Norm. Sup. \textbf{71} (1954), 213--242. 

\bibitem{hartlappthesis} M. Hartlapp, \emph{Quantified versions of the Ingham-Karamata theorem and the Wiener-Ikehara theorem}, PhD thesis, 2024, available at https://nbn-resolving.org/urn:nbn:de:bsz:14-qucosa2-960968.

\bibitem{katsuda-tsunada} A. Katsuda, T. Sunada, \emph{Closed orbits in homology classes,} Inst. Hautes Études Sci. Publ. Math. No. \textbf{71} (1990), 5--32.

\bibitem{koga} T. Koga, \emph{A Tauberian theorem of Wiener-Ikehara type and its applications,} J. Fourier Anal. Appl. \textbf{27} (2021), Paper No. 18, 21 pp.

\bibitem{korevaarbook} J.~Korevaar, \textit{Tauberian theory. A century of developments}, Grundlehren der Mathematischen Wissenschaften, 329, Springer-Verlag, Berlin, 2004.
 
\bibitem{malliavin} P.~Malliavin, \emph{Sur le reste de la loi asymptotique de r\'{e}partition des nombres premiers g\'{e}n\'{e}ralis\'{e}s de Beurling,} Acta Math. \textbf{106} (1961), 281--298. 

\bibitem{MV07} H.~L.~Montgomery, R.~C.~Vaughan,
\textit{Multiplicative number theory. I. Classical theory},
Cambridge Studies in Advanced Mathematics, Vol.~97, University
Press, Cambridge, 2007.

\bibitem{nyman} B.~Nyman, \emph{A general prime number theorem,} Acta Math. \textbf{81} (1949), 299--307.

\bibitem{petkov-stoyanov} V. Petkov, L. Stoyanov, \emph{Sharp large deviations for hyperbolic flows,} Ann. Henri Poincar\'e \textbf{21} (2020), 3791--3834.

\bibitem{p-t-z} L. Pierce, C. Turnage-Butterbaugh, A. Zaman, \emph{A guide to Tauberian theorems for arithmetic applications,} Ess. Number Th. \textbf{5} (2026), 271--374

\bibitem{pintz1980} J. Pintz, \emph{On the remainder term of the prime number formula II. On a theorem of Ingham,} Acta Arith. \textbf{37} (1980), 209--220.

\bibitem{revesz2025} Sz. Révész, \emph{The Method of Pintz for the Ingham Question About the Connection of Distribution of $\zeta$-Zeroes and Order of the Error in the PNT in the Beurling Context}, Michigan Math. J. \textbf{75} (2025), 761--806.

\bibitem{revesz-roton} Sz.~Gy.~R\'{e}v\'{e}sz, A. de Roton, \emph{Generalization of the effective Wiener-Ikehara theorem,} Int. J. Number Theory \textbf{9} (2013), 2091--2128. 

\bibitem{s-v} J.-C.~Schlage-Puchta, J.~Vindas, \textit{The prime number theorem for Beurling's generalized numbers. New cases}, Acta Arith. \textbf{153} (2012), 299--324.

\bibitem{shubin2001} M.~A.~Shubin, \emph{Pseudodifferential operators and spectral theory,} Second edition, Springer-Verlag, Berlin, 2001.

\bibitem{tenenbaumbook} G.~Tenenbaum, \emph{Introduction to analytic and probabilistic number theory,} American Mathematical Society, Providence, RI, 2015.

\bibitem{tranoy-vindas} Y. Tranoy, J. Vindas, \emph{A Wiener-Ikehara type theorem and its application to Chebyshev bounds for Beurling primes}, Anal. Math. (2026), available at https://doi.org/10.1007/s10476-026-00183-z.

\bibitem{vladimirov} V.~S.~Vladimirov, \textit{Methods of the theory of generalized functions,} Analytical Methods and Special Functions, 6, Taylor \& Francis, London, 2002.

\bibitem{zemanian} A.~H.~Zemanian, \emph{Distribution theory and transform analysis. An introduction to generalized functions, with applications}, Second edition, Dover Publications, Inc., New York, 1987. 


\bibitem{zhang2014} W.-B.~Zhang, \emph{Wiener-Ikehara theorems and the Beurling generalized primes,} Monatsh. Math. \textbf{174} (2014), 627--652.

\bibitem{zhang-exact} W.-B.~Zhang, \emph{Exact Wiener-Ikehara theorems,} Acta Arith. \textbf{187} (2019), 357--380. 

\end{thebibliography}
\end{document}